\newenvironment{enumalph}{\begin{enumerate}  }{\end{enumerate}}
\newtheorem{theorem}{Theorem}[section]
\newtheorem{proposition}[theorem]{Proposition}
\newtheorem{question}[theorem]{Question}
\newtheorem{corollary}[theorem]{Corollary}
\newtheorem{lemma}[theorem]{Lemma}
\theoremstyle{definition}
\newtheorem{remark}[theorem]{Remark}
\DeclareMathOperator{\rank}{rank}
\DeclareMathOperator{\chr}{char}
\DeclareMathOperator{\Ext}{Ext}
\DeclareMathOperator{\opH}{H}
\DeclareMathOperator{\Hom}{Hom}
\DeclareMathOperator{\ind}{ind}
\newcommand{\fraku}{\mathfrak{u}}
\begin{document}

\title[Cohomology of $SL_2$ and related structures]{Cohomology of $SL_2$ and related structures}

\author{Klaus Lux}
\address{Department of Mathematics\\ University of Arizona \\ Tucson\\ AZ~85721, USA}
\email{klux@math.arizona.edu}

\author{Nham V. Ngo}
\address{Department of Mathematics\\ University of Arizona \\ Tucson\\ AZ~85721, USA}
\email{nhamngo@math.arizona.edu}

\author{Yichao Zhang}
\address{Department of Mathematics\\ University of Arizona \\ Tucson\\ AZ~85721, USA}
\email{zhangyichao2002@gmail.com}
\email{yichaozhang@math.arizona.edu}

\date{\today}

\maketitle

\begin{abstract}
Let $SL_2$ be the rank one simple algebraic group defined over an algebraically closed field $k$ of characteristic $p>0$. The paper presents a new method for computing the dimension of the cohomology spaces $\opH^n(SL_2,V(m))$ for Weyl $SL_2$-modules $V(m)$. We provide a closed formula for $\dim\opH^n(SL_2,V(m))$ when $n\le 2p-3$ and show that this dimension is bounded by the $(n+1)$-th Fibonacci number. This formula is then used to compute $\dim\opH^n(SL_2, V(m))$ for $n=1, 2,$ or $3$. For $n>2p-3$, an exponential bound, only depending on $n$, is obtained for $\dim\opH^n(SL_2,V(m))$. Analogous results are also established for the extension spaces $\Ext^n_{SL_2}(V(m_2),V(m_1))$ between Weyl modules $V(m_1)$ and $V(m_2)$. In particular, we determine the degree three extensions for all Weyl modules of $SL_2$. As a byproduct, our results and techniques give explicit upper bounds for the dimensions of the cohomology of the Specht modules of symmetric groups, the cohomology of simple modules of $SL_2$, and the finite group of Lie type $SL_2(p^s)$. 
\end{abstract}

\section{Introduction}

The problem of computing the cohomology of $SL_2$ with coefficients in various $SL_2$-modules is challenging despite the fact that representation theory of $SL_2$ has been extensively studied. Even in the case of simple modules, only partial results can be found in the literature, see \cite[14.7]{Hum} for a short survey on the topic. On the other hand, in the case of Weyl modules, a little more is known. For example, the first degree extension spaces between Weyl modules were first computed by Erdmann in 1995 \cite{E:1995}. Five years later, Cox and Erdmann calculated the second degree extension spaces \cite{CE}. In 2007, Parker \cite{Pa:2006} introduced a recursive formula for computing the extension spaces between Weyl modules. Recently, Erdmann, Hannabuss, and Parker used results in \cite{Pa:2006} and techniques in generating functions to show that the sequence $$\left(\max_{m\in X^+}\{\dim\opH^n(SL_2,V(m))\}\right)_{n\in\mathbb{N}},$$ with $X^+$ the set of dominant weights (see Section \S\ref{notation}), grows at least exponentially in $n$ \cite{EHP:2013}. Preferably, one would like to give a closed formula for the dimension of these extension spaces. This task is the original motivation for the paper.

To compute $\opH^n(SL_2,V(m))$, the common strategy used in the aforementioned papers is a combination of computing the cohomology of the Weyl modules $V(m)$ over the first Frobenius kernel $G_1$ and applying the Lyndon-Hochschild-Serre spectral sequence to $G_1$ and $G/G_1$. We introduce in this paper a new method, which uses information of all Frobenius kernels of $SL_2$ and then employing a stability theorem of Cline, Parshall, and Scott \cite[Theorem 7.4]{CPS}. This new approach allows us to equate the dimension of $\opH^n(SL_2,V(m))$ to the number of solutions for a certain linear system. By studying the latter, we obtain new and interesting results for the dimension of the cohomology and extensions of Weyl modules. 


The paper is organized as follows. After setting up necessary notation and background in Section \ref{notation}, we study the number of solutions $N(m,n)$ for a certain type of linear system in Section \ref{counting}. In particular, for $n\le 2p-2$, we determine the exact value of $N(m,n)$ for all $m$, cf. Lemma \ref{key_lemma}, and show that this number is bounded from above by the $n$-th Fibonacci number $F(n)$, cf. Proposition \ref{Fibonacci}. For $n>2p-2$, we obtain a recursive formula for $N(m,n)$, cf. Proposition \ref{recursive 1 N(m,n)} or Corollary \ref{recursive 2 N(m,n)}, and an exponential upper bound $C_n$ only depending on $n$, cf. Corollary \ref{exponential bound for V(m)}. The bound is established from our investigations on some partition functions. 

Section \ref{cohomology} virtually contains two parts. We first show that the dimension of $\opH^{n}(SL_2,V(m))$ is given by $N(m+1,n+1)$ using mainly the ingredients from the second author's paper \cite{Ngo:2013}. Then we derive results on the cohomology dimension of Weyl modules from the calculations on $N(m,n)$ in the previous section, cf. Theorem \ref{cohomology in V(m)}. We further compute the dimensions of $\opH^n(SL_2,V(m))$ with $n\le 3$, cf. Proposition \ref{low degree cohomology of V(m)}. In addition, when $p=2$, our formula of $\dim\opH^n(SL_2,V(m))$ is exactly the same as that in \cite[Corollary 3.2.2]{EHP:2013} (see Remark \ref{p=2}) and gives a nice upper bound for $\dim\opH^n(SL_2,V(m))$, cf. Theorem \ref{bound for p=2}. Finally, the exponential bound $C_{n+1}$, described in Remark \ref{modification of (b_i)}, of $\dim\opH^n(SL_2,V(m))$ shows that the sequence $\left(\max\{\dim\opH^n(SL_2,V(m)): m\in\mathbb{N}\}\right)$ grows exponentially. This strengthens the main result of Erdmann, Hannabuss, and Parker \cite[Corollary 5.6.3]{EHP:2013}.  

We generalize our work to higher extension spaces between Weyl modules in Section \ref{extension}. The main tools here are the recursive formula of Parker \cite[Theorem 5.1]{Pa:2006} and the low degree cohomology of $V(m)$ computed in the preceding section. Explicitly, we give a complete description for the dimension of the third degree extension space between Weyl modules, and show that the dimension of these spaces is at most 3, cf. Theorem \ref{Ext^3} and Corollary \ref{Ext^3 bound}. This result extends the work of Erdmann et. al. in \cite{E:1995} and \cite{CE}. At the end of the section, we obtain analogs of the bounds for $\dim\opH^n(SL_2,V(m))$, that is, for integers $m_1\ge 0$ and $0\le m_2< p^r$ for some $r\ge 1$, we have for each $n\le 2p-3$,
\[ \dim\Ext^n_{SL_2}(V(m_2),V(m_1))\le F(n+1)+(r-1)F(n), \]
and for arbitrary $n\ge 1$, 
\[ \dim\Ext^n_{SL_2}(V(m_2),V(m_1))\le C_{n+2}+(r-1)C_n. \]

The final section of this paper contains applications of our results and techniques in several topics of group cohomology. We focus on finding a universal bound for the cohomology dimension of various groups, which depends only on the degree. Firstly, using a result of Kleshchev and Nakano on the cohomology of general linear and symmetric groups \cite{KN:2001}, we compute an upper bound of the cohomology dimension of the Specht modules for degree up to $2p-4$  in certain cases, cf. Theorem \ref{Specht module}. Next, we determine universal bounds for the dimension of cohomology of $SL_2$ and finite groups of Lie type $SL_2(p^s)$ with coefficients in simple modules. More precisely, using the combinatorial description of Carlson \cite{Ca:1983} and a computational technique described in Section \ref{counting}, we prove that
\[\dim\opH^n(SL_2(p^s), L)\le (2n+7)C_n,\]
for each simple $SL_2(p^s)$-module $L$. This result is then used together with a theorem on generic cohomology from \cite{CPSvdK} and a recursive formula from \cite{Pa:2006} to derive 
\[\dim\opH^n(SL_2, L(m))\le (2n+7)C_n,\]
for any simple $SL_2$-module $L(m)$. The proof is rather interesting as it indicates that results for cohomology of algebraic groups can be obtained from that of finite groups of Lie type, which is opposite to the usual trend. It is worth noting that the existence of such universal bounds was originally proved for simple algebraic groups by Parshall and Scott \cite{PS:2011} and recently extended to finite groups of Lie type in \cite{BNPPSS}. However, there are no explicit bounds in the literature. Last but not least, these universal bounds  show that the growth of the cohomology dimension for both groups is exponential, which significantly extends the main result of Stewart in \cite{Ste}.              

\section{Some preliminaries}\label{notation}

In the following, we consider $SL_2$ as a simple algebraic group defined over an algebraically closed field $k$ with $\chr(k)=p>0$, and we denote $G=SL_2$. We use standard notation as, for example, described in Jantzen's book \cite{Jan:2003}. For the convenience of the reader, we review a few important terminologies as follows. 

Let $B$ be a Borel subgroup of $G$ and $T$ a maximal torus in $B$. Let $U$ be the unipotent radical of $B$. We will write $X(T)$ for the weight lattice of $T$, and let $\varpi$ be the fundamental weight in $X(T)$. It then follows that $X(T)=\mathbb{Z}\varpi$. Abbreviating $n\varpi$ by $n$ for each $n\in\mathbb{Z}$, we identify $X(T)$ with $\mathbb{Z}$. Next, we denote by $X^+$ the set of dominant weights in $X(T)$ and so we have $X^+=\mathbb{N}$. Note that we consider $\mathbb{N}$ to be the set of non-negative integers. Let
\[ X^+_r=\{ m\varpi\in X^+: m< p^r\}.\]
When $r=1$, $X^+_1$ is called the set of restricted dominant weights. The root system of $G$ is denoted by $\Phi=\{\pm\alpha\}$ with $\alpha=2\varpi$, so that the root lattice $\mathbb{Z}\Phi=2\mathbb{Z}$. 

For a given positive integer $r$, let $F_r:G\to G^{(r)}$ be the $r$-th Frobenius morphism, see for example \cite[I.9]{Jan:2003}. The scheme-theoretic kernel $G_r=\ker(F_r)$ is called the $r$-th Frobenius kernel of $G$. Given a closed subgroup $H$ of $G$, we write $H_r$ for the scheme-theoretic kernel of the restriction $F_r:H\to H^{(r)}$. In other words, we have
\[ H_r=H\cap G_r. \] 
Given a $G$-module $M$, we write $M^{(r)}$ for the module obtained by twisting the structure map for $M$ by $F_r$. Note that $G_r$ acts trivially on $M^{(r)}$. Conversely, if $N$ is a $G$-module on which $G_r$ acts trivially, then there is a unique $G$-module $M$ with $N = M^{(r)}$. We denote the module $M$ by $N^{(-r)}$. Throughout the paper, tensor products will be taken over the field $k$ unless otherwise stated. 

Let $V$ be a $B$-module. Then the induced $G$-module $\ind_B^G(V)$ is defined as 
\[ \ind_B^G(V)=(k[G]\otimes V)^B. \]
The higher derived functors of $\ind_B^G(-)$ are denoted by $R^i\ind_B^G(-)$, but we will only write $\opH^i(-)$ for brevity. Recall from \cite[Proposition 4.5]{Jan:2003} that, for all $\lambda\in X^+$, $\opH^i(\lambda)=0$ for all $i\ge 1$. Now we denote for each $G$-module $M$ the dual $G$-module $\Hom(M,k)$ by $M^*$. Then for each $\lambda\in X^+$, the Weyl module (of highest weight $\lambda$) is defined as $V(\lambda):=\ind_B^G(\lambda)^*$.

For each $G$-module $M$, we define $\Ext^*_G(M,-)$ to be the derived functor of $\Hom_G(M,-)$, see \cite[I.4.2]{Jan:2003} for details. Suppose $N$ is also a $G$-module. Then the $n$-th degree cohomology space of $G$ with coefficients in $N$ is defined to be
\[ \opH^n(G,N)=\Ext^n_G(k,N). \]

We denote $\mathbb{Z}^+$ for the set of positive integers. Given $m\in\mathbb{Z}^+$, let $r_m$ be the largest positive integer such that $p^{r_m}\le m$ and let
\[c_0+c_1p+\cdots+c_{r_m}p^{r_m}\]
be the $p$-adic expansion of $m$ with $c_i\in\{0,\ldots,p-1\}$. The height of $m$, denoted by $\textrm{ht}(m)$, is defined by $\sum_{i=0}^{r_m}c_i$. We denote further by $s_m$ the number of zeros in the sequence $\{c_1,\ldots,c_{r_m}\}$. Hence, $r_m-s_m$ is the number of non-zero $c_i$'s with $1\le i\le r_m$.

For $x\in \mathbb{R}$, we let $\lfloor x\rfloor$ be the floor of $x$, namely the largest integer that is smaller than or equal to $x$. We recall that the Fibonacci sequence is defined by
\[F(1)=1,\quad F(2)=1, \quad  F(n)=F(n-1)+F(n-2), \]
for $n\ge 2$. We define a sequence to grow at least exponentially if there is an exponential lower bound. We then say a sequence grows exponentially if it is bounded above and below by exponential functions. 

\section{Counting the number of solutions of certain linear systems}\label{counting}
In this section, we study the number of solutions of a linear system which will be mainly used in latter parts of the paper. Throughout this section, we assume $p$ is an odd prime, unless otherwise stated.
\subsection{} Given $m, n\in\mathbb{N}$ and $r\in\mathbb{Z}^+$, we are interested in the set of pairs $(\mathfrak a,\mathfrak b)$ with $\mathfrak a=(a_1,\ldots,a_r)\in\mathbb{N}^r$ and $\mathfrak b=(b_1\ldots,b_r)\in\{0,1\}^r$ satisfying the system of two linear equations
\begin{align}\label{system}
\begin{cases}
\displaystyle{2\sum_{i=1}^r a_i+\sum_{j=1}^rb_j=n},\\
\displaystyle{b_1+\sum_{i=1}^{r-1}(a_i+b_{i+1})p^i+a_rp^r=m.}
\end{cases}
\end{align}

For convenience, we denote by $S_r(m,n)$ the set of solutions $(\mathfrak a,\mathfrak b)$ to \eqref{system} and $N_r(m,n)=|S_r(m,n)|$. The following lemma gives a lower bound for $N_r(m,n)$. 

\begin{lemma}\label{key_lemma}
Let $m, n$ be non-negative integers and let 
\[ c_0+c_1p+\cdots+c_{r_m}p^{r_m}\]
be the $p$-adic expansion of $m$ with $0\le c_i\le p-1$. Let $s_m$ be the number of zeros in the sequence $\left(c_1,\ldots,c_{r_m}\right)$. Then for $r\ge r_m+1$, we have 
\[ N_r(m,n)\ge \binom{r_m-s_m}{2\textrm{ht}(m)-c_0-n}. \] 
In particular, the equality holds when $n\le 2p-2$.
\end{lemma}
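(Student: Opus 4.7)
The plan is to reinterpret the second equation of \eqref{system} as expressing $m$ in base $p$ with ``digits'' $b_1$, $a_i+b_{i+1}$ (for $1\le i\le r-1$), and $a_r$, and then exploit the uniqueness of the $p$-adic expansion whenever no carrying occurs. In the generic case this will pin the digits down to $c_0,c_1,\ldots,c_{r_m},0,\ldots,0$, reducing the count to a straightforward binomial choice dictated by the first equation of \eqref{system}.

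For the lower bound, I would exhibit solutions directly. Because $r\ge r_m+1$, the $p$-adic digit $c_i$ vanishes for $i>r_m$; I therefore set $a_r=0$ and $a_i=b_{i+1}=0$ for $r_m<i\le r-1$. In the active range $1\le i\le r_m$, I impose $b_1=c_0$ and $a_i=c_i-b_{i+1}$ with $b_{i+1}\in\{0,1\}$, which forces $b_{i+1}\le c_i$ by non-negativity of $a_i$ (and hence $b_{i+1}=0$ whenever $c_i=0$). These assignments automatically satisfy the second equation. Substituting them into the first equation and telescoping the sums yields
\[
\sum_{i=2}^{r_m+1} b_i \;=\; 2\,\textrm{ht}(m)-c_0-n.
\]
The left-hand side records the number of $b_{i+1}$'s that equal $1$; the only indices where this is allowed are the $r_m-s_m$ positions in $\{1,\ldots,r_m\}$ with $c_i\ne 0$. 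The number of admissible configurations is therefore $\binom{r_m-s_m}{2\,\textrm{ht}(m)-c_0-n}$.

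For the equality when $n\le 2p-2$, I would argue that this no-carry construction captures every solution. From the first equation, $\sum_i a_i\le n/2\le p-1$, so each individual $a_i\le p-1$. The apparent digit $a_i+b_{i+1}$ is then at most $p$, and if equality held we would need $a_i=p-1$ and $b_{i+1}=1$, giving $n\ge 2(p-1)+1=2p-1$ and contradicting the hypothesis. All the digits $b_1$, $a_i+b_{i+1}$, $a_r$ therefore lie in $\{0,\ldots,p-1\}$, so uniqueness of the $p$-adic expansion of $m$ identifies them with $c_0,c_1,\ldots,c_{r_m},0,\ldots,0$. Every solution thus arises from the lower-bound construction, giving equality.

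The step I expect to require the most care is the telescoping manipulation producing the identity $\sum_{i\ge 2}b_i=2\,\textrm{ht}(m)-c_0-n$, together with the bookkeeping that cleanly separates the $r_m-s_m$ freely choosable indices (those $i\in\{1,\ldots,r_m\}$ with $c_i\ne 0$) from the boundary ones (where $c_i=0$ forces $a_i=b_{i+1}=0$, or where $i>r_m$). Once this combinatorial setup is in place, the uniqueness-of-$p$-adic-expansion argument for the equality case is essentially automatic.
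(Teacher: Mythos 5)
Your proposal is correct and follows essentially the same route as the paper: identify the no-carry digit constraints $b_1=c_0$, $a_i+b_{i+1}=c_i$ (and zero for $i>r_m$), reduce the first equation to $\sum_{i\ge 2}b_i=2\,\mathrm{ht}(m)-c_0-n$ to get the binomial lower bound, and for $n\le 2p-2$ show every digit is $<p$ so uniqueness of the $p$-adic expansion forces all solutions into this family. The only cosmetic difference is that you bound $a_i+b_{i+1}\le p$ first and then rule out the boundary case $a_i=p-1,\,b_{i+1}=1$, whereas the paper states $a_i+b_{i+1}<p$ directly; the underlying estimate is identical.
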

\begin{proof}
We present one way to get solutions of the system \eqref{system}. Then in the case when $n\le 2p-2$ we show that this method gives all solutions of \eqref{system}. By comparing the $p$-adic expansion of $m$ with the left-hand side of the second equation in \eqref{system}, we have
\begin{align}\label{solutions of the first eqn}
\begin{cases}
b_1 =c_0, & \\
a_i+b_{i+1} =c_i &~~\text{for}~~ 1\le i\le r_m, \\
a_j =b_{j+1}=0 &~~\text{for}~~j\ge r_m+1. 
\end{cases}
\end{align}
Eliminating the $a_i$'s in the first equation of \eqref{system}, we obtain 
\[ 2\sum_{i=0}^{r_m}c_i-\sum_{j=1}^{r_m+1}b_j=n\]
or 
\begin{align}\label{equiv of eqn 1}
 \sum_{j=2}^{r_m+1}b_j=2\textrm{ht}(m)-c_0-n.
\end{align}
Now note that the left-hand side is at most $r_m-s_m$. This shows that the number of ways to choose $b_j$'s is
\[ \displaystyle{\binom{r_m-s_m}{2\textrm{ht}(m)-c_0-n}}.\]
Thus, we have proved the first statement of the lemma. 

For $n\le 2p-2$, it suffices to show that all solutions of the system \eqref{system} are those of two equations \eqref{solutions of the first eqn} and \eqref{equiv of eqn 1}. Indeed, if $n\le 2p-2$ then from the first equation we obtain that $b_1<p,$ and $a_i+b_{i+1}<p$ for all $1\le i\le r_m$. It then follows that the left-hand side of the second equation must be the $p$-adic expansion of $m$. Hence, the system \eqref{system} is equivalent to the system of equations \eqref{solutions of the first eqn} and  \eqref{equiv of eqn 1}. So we are done.
\end{proof}

\begin{remark}
The inequality for the number of solutions of the system \eqref{system} in general is strict. In other words, there are solutions of \eqref{system} that do not satisfy \eqref{solutions of the first eqn}. For example, consider $m=138, n=8$ and $p=3$, then the set
\[ b_1=0, b_2=b_3=b_4=b_5=1, a_1=a_3=a_4=a_5=0, a_2=2 \]
is one solution of \eqref{system} with $a_2+b_3=3=p$ which does not satisfies \eqref{solutions of the first eqn}.
\end{remark}

\subsection{Recursive formula for $N_r(m,n)$}
Recall that Lemma \ref{key_lemma} gives the exact values of $N_r(m,n)$ for small values of $n$, i.e., $n\le 2p-2$. In the following, we compute this $N_r(m,n)$ for all $n$ recursively. 

Note that we may embed $S_r(m,n)$ into $S_{r+1}(m,n)$ via the below map
\begin{align*}
i_0: S_r(m,n) &\to S_{r+1}(m,n),\\
\left((a_1,\ldots,a_r), (b_1,\ldots,b_r)\right) &\mapsto \left((a_1,\ldots,a_r,0), (b_1,\ldots,b_r,0)\right).
\end{align*}
Moreover if we take $r$ such that $p^r>m$, then the set $S_r(m,n)$, hence $N_r(m,n)$, stabilizes, i.e., $i_0(S_{r'}(m,n))=S_{r'+1}(m,n)$ for all $r'\ge r$. Henceforth, until the end of this section, we omit the subscript $r$ in the notation and write $S(m,n)$ and $N(m,n)$. In particular, for $r>\log_p(m)$, $N(m,n)=N_{r}(m,n)$.

\begin{lemma}\label{properties of N(m,n,p)}
For all $m, n\ge 0$, we have
\begin{enumalph}
\item $N(m,n)\ne 0$ only if $m\equiv 0$ or $1$ (mod $p$). 
\item If $p\mid m$ then $N(m,n)=N(m+1,n+1)$.
\item If $p\mid m$ then $N(m,n)=
\begin{cases}
0~~&\text{if}~~ n>\frac{2m}{p},\\
1~~&\text{if}~~ n=\frac{2m}{p}.
\end{cases}
$
\end{enumalph}
\end{lemma}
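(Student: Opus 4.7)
The plan is to handle the three parts by direct manipulation of the system \eqref{system}, exploiting that the unit digit $b_1$ is forced by the residue of $m$ modulo $p$. For (a), I reduce the second equation of \eqref{system} modulo $p$: every term on the left except $b_1$ is divisible by $p$, so $b_1 \equiv m \pmod{p}$, and since $b_1 \in \{0,1\}$ this forces $m \equiv 0$ or $1 \pmod{p}$.

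For (b), I construct an explicit bijection $S(m,n) \to S(m+1, n+1)$ under the hypothesis $p \mid m$. By (a), every solution for $(m,n)$ has $b_1 = 0$ and every solution for $(m+1, n+1)$ has $b_1 = 1$. Switching $b_1 = 0$ to $b_1 = 1$ while leaving all other coordinates unchanged adds exactly $1$ to the right-hand side of both equations in \eqref{system}, matching the shift $(m,n) \to (m+1, n+1)$, and is plainly invertible.

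For (c), suppose $p \mid m$, so $b_1 = 0$. Dividing the second equation by $p$ and re-indexing yields $m/p = \sum_{i=1}^{r} a_i p^{i-1} + \sum_{j=2}^{r} b_j p^{j-2}$. Doubling this and subtracting the first equation (which now reads $n = 2\sum_{i=1}^r a_i + \sum_{j=2}^r b_j$) produces the identity
\[
\frac{2m}{p} - n \;=\; 2\sum_{i=1}^{r} a_i (p^{i-1} - 1) \;+\; \sum_{j=2}^{r} b_j (2 p^{j-2} - 1).
\]
Every summand on the right is nonnegative, since $p^{i-1} - 1 \ge 0$ for $i \ge 1$ and $2p^{j-2} - 1 \ge 1$ for $j \ge 2$ (using $p \ge 2$). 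This shows $n \le 2m/p$, proving $N(m,n) = 0$ whenever $n > 2m/p$. When $n = 2m/p$, every term with a strictly positive coefficient must vanish, forcing $a_i = 0$ for $i \ge 2$ and $b_j = 0$ for all $j \ge 2$; combined with $b_1 = 0$, the second equation then reduces to $a_1 p = m$ and yields the unique solution $a_1 = m/p$.

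None of the three parts presents a serious obstacle: (a) is an immediate mod-$p$ observation and (b) is a one-line bijection. The only care point is the index bookkeeping when deriving the key identity in (c), after which the strict positivity of the coefficients $p^{i-1} - 1$ (for $i \ge 2$) and $2p^{j-2} - 1$ (for $j \ge 2$) does all the work.
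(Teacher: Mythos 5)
Your proof is correct and follows essentially the same approach as the paper: all three parts are proved by direct manipulation of the linear system, with (a) following from reducing the second equation modulo $p$, (b) from the bijection that flips $b_1$, and (c) from analyzing the system when $p \mid m$. Your treatment of (c) is notably more rigorous than the paper's, which simply asserts the two claims; the identity
\[
\frac{2m}{p} - n = 2\sum_{i=1}^{r} a_i (p^{i-1} - 1) + \sum_{j=2}^{r} b_j (2 p^{j-2} - 1)
\]
with its manifestly nonnegative right-hand side is a clean way to make the nonexistence of solutions for $n > 2m/p$ and the uniqueness at $n = 2m/p$ completely transparent.
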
 
\begin{proof}
Part (a) follows since $b_1$ only takes $0$ or $1$.

(b) Note that $N(m,n)$ is the number of solutions for the system \eqref{system} with $b_1=0$ which is $N(m+1,n+1)$ when setting $b_1=1$. Hence, we obtain the equality.    

(c) If $n=\frac{2m}{p}$, then $m=a_1p$ with $a_1=\frac{m}{p}$, showing that $N(m,n)=1$. For $n>\frac{2m}{p}$, there are no solutions for the system \eqref{system}. Hence, $N(m,n)=0$ as stated.
\end{proof}
Next, we provide a recursive formula for $N(m,n)$.

\begin{proposition}\label{recursive 1 N(m,n)}
For non-negative integers $m$ and $n$, we have
\begin{align}\label{recursive formula}
 N(m,n)=
\begin{cases}
\sum\limits_{0\le a\le\lfloor\frac{n}{2}\rfloor}N\left(\frac{m}{p}-a,n-2a\right) ~~~&\text{if}~~~p\mid m,\\
\sum\limits_{0\le a\le\lfloor\frac{n}{2}\rfloor}N\left(\frac{m-1}{p}-a,n-2a-1\right) ~~~&\text{if}~~~m\equiv 1~ (mod~ p),\\
0 &\text{otherwise}.
\end{cases}
\end{align}
\end{proposition}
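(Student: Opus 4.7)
The plan is to condition first on $b_1$ (which is forced modulo $p$ by the second equation of \eqref{system}) and then on $a_1$, reducing an instance of \eqref{system} of length $r$ to an instance of length $r-1$ with smaller parameters. The ``otherwise'' case is immediate from Lemma \ref{properties of N(m,n,p)}(a), so only $p\mid m$ and $m\equiv 1\pmod{p}$ need attention.

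Reducing the second equation of \eqref{system} modulo $p$ yields $b_1\equiv m\pmod{p}$; together with $b_1\in\{0,1\}$ and $p\ge 3$, this forces $b_1=0$ when $p\mid m$ and $b_1=1$ when $m\equiv 1\pmod{p}$. With $b_1$ fixed, I would divide the second equation by $p$, fix $a_1=a$, and reindex via $a'_j=a_{j+1}$ and $b'_j=b_{j+1}$ for $1\le j\le r-1$. The second equation then becomes
\begin{align*}
b'_1+\sum_{j=1}^{r-2}(a'_j+b'_{j+1})p^j+a'_{r-1}p^{r-1}=\tfrac{m-b_1}{p}-a,
\end{align*}
and the first becomes $2\sum_{j=1}^{r-1}a'_j+\sum_{j=1}^{r-1}b'_j=n-b_1-2a$. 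These are precisely the defining equations of $S_{r-1}\bigl(\tfrac{m-b_1}{p}-a,\,n-b_1-2a\bigr)$, so the number of completions for fixed $a$ equals $N_{r-1}\bigl(\tfrac{m-b_1}{p}-a,\,n-b_1-2a\bigr)$. Choosing $r$ large enough that all relevant $N_r$'s coincide with their stable values $N$ (per the discussion preceding Lemma \ref{properties of N(m,n,p)}), this equals $N\bigl(\tfrac{m-b_1}{p}-a,\,n-b_1-2a\bigr)$.

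Summing over nonnegative integers $a$ produces the two stated formulas, with upper limit $\lfloor n/2\rfloor$ justified as follows: terms with $a>\lfloor(n-b_1)/2\rfloor$ vanish because the second argument of $N$ would be negative, and terms with $a>(m-b_1)/p$ vanish because the first argument would be negative. The main obstacle is purely clerical, namely tracking the shift $a_j\mapsto a'_j$, $b_j\mapsto b'_j$ through the second equation and verifying that no off-by-one error arises in the $p^j$ exponents after division by $p$; everything else is a direct rearrangement.
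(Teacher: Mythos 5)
Your proof is correct and follows essentially the same route as the paper: fix $b_1$ by reducing the second equation modulo $p$ (the ``otherwise'' case being Lemma~\ref{properties of N(m,n,p)}(a)), divide by $p$, peel off $a_1$, and reindex to obtain a length-$(r-1)$ instance of \eqref{system} with parameters $\bigl(\tfrac{m-b_1}{p}-a_1,\,n-b_1-2a_1\bigr)$, then sum over $a_1$. Your handling of the upper limit of summation is, if anything, slightly more careful than the paper's, since you explicitly note that the extra term at $a=\lfloor n/2\rfloor$ in the $m\equiv 1$ case vanishes because the second argument of $N$ becomes negative.
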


\begin{proof}
By Lemma \ref{properties of N(m,n,p)}(a), we may assume that $m\equiv \epsilon$ (mod $p$) with $\epsilon=0$ or $1$, i.e., $m=pu+\epsilon$ for some $u$ in $\mathbb{N}$. From the second equation of system \eqref{system} we must have $b_1=\epsilon$. So the system \eqref{system} can be rewritten as
\[ 
\begin{cases}
\sum_{i=1}^r2a_i+\sum_{j=2}^rb_j=n-\epsilon,\\
a_1+b_2+(a_2+b_3)p+\cdots+(a_r)p^{r-1}=u,
\end{cases}
\]
which we rewrite as 
\[ 
\begin{cases}
\sum_{i=2}^r2a_i+\sum_{j=2}^rb_j =n-\epsilon-2a_1,\\
b_2+(a_2+b_3)p+\cdots+(a_r)p^{r-1} =u-a_1.
\end{cases}
\]
By shifting the indices of the $a_i$'s and $b_j$'s, we obtain the number of solutions of the system above is $N(u-a_1,n-\epsilon-2a_1)$. Since $a_1\le\lfloor\frac{n}{2}\rfloor$, we obtain the recursive formula.
\end{proof} 

Indeed, we can reformulate \eqref{recursive formula} in a much nicer form.

\begin{corollary}\label{recursive 2 N(m,n)}
For non-negative integers $m, n$, we have
\[ N(m,n)=
\begin{cases}
N(m-p,n-2)+N(\frac{m}{p},n)~~~&\text{if}~~~p\mid m,\\
N(m-p,n-2)+N(\frac{m-1}{p},n-1)~~~&\text{if}~~~m\equiv 1~(\text{mod}~p).
\end{cases}
\]
\end{corollary}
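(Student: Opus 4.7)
The plan is to derive the corollary as a direct re-indexing of the sum in Proposition \ref{recursive 1 N(m,n)}. I treat the two cases in parallel.

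First, suppose $p\mid m$. Proposition \ref{recursive 1 N(m,n)} expresses $N(m,n)$ as a sum over $0\le a\le\lfloor n/2\rfloor$ of $N(m/p-a,\,n-2a)$. I would peel off the $a=0$ term, which contributes exactly $N(m/p,n)$, and then change the summation variable $a\mapsto a+1$ in the remaining terms. This yields
\[
N(m,n)=N\!\left(\tfrac{m}{p},n\right)+\sum_{0\le a\le\lfloor (n-2)/2\rfloor}N\!\left(\tfrac{m}{p}-1-a,\,n-2-2a\right).
\]
Since $p\mid(m-p)$ and $(m-p)/p=m/p-1$, applying Proposition \ref{recursive 1 N(m,n)} in reverse identifies the remaining sum as $N(m-p,n-2)$, giving the first branch of the formula.

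For the second branch, assume $m\equiv 1\pmod p$, so that $m-p\equiv 1\pmod p$ and $(m-p-1)/p=(m-1)/p-1$. The same procedure applies: split the $a=0$ term, which now contributes $N((m-1)/p,\,n-1)$, shift the index in the rest, and recognize the resulting sum as the expansion of $N(m-p,n-2)$ provided by Proposition \ref{recursive 1 N(m,n)} applied at $m-p$.

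The only subtle points are boundary cases, which I would dispatch with the conventions $N(m,n)=0$ for negative $m$ or $n$ (consistent with Lemma \ref{properties of N(m,n,p)}). When $n\le 1$ the shifted sum is empty and $N(m-p,n-2)=0$ on the nose; when $0\le m<p$ the term $N(m-p,n-2)$ vanishes, matching the $a\ge 1$ contributions that are already zero in the original sum because $m/p-a<0$ (respectively $(m-1)/p-a<0$). So bookkeeping is the only real task, and there is no genuine obstacle beyond careful index shifting.
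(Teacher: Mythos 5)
Your proof is correct and matches the paper's method: both peel off the $a=0$ term from the sum in Proposition~\ref{recursive 1 N(m,n)}, shift the summation index, and recognize the leftover sum as $N(m-p,n-2)$ by applying the proposition in reverse. The only cosmetic difference is in the second case, where the paper deduces it from the first case together with Lemma~\ref{properties of N(m,n,p)}(b), while you redo the re-indexing directly; both routes are fine, and your explicit note on the boundary conventions (which the paper leaves implicit) is harmless extra care.
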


\begin{proof}
Suppose that $p\mid m$. By \eqref{recursive formula} we get the following sequence of equalities
\begin{align*}
N(m,n) &=\sum\limits_{0\le a\le\lfloor\frac{n}{2}\rfloor}N\left(\frac{m}{p}-a,n-2a\right)\\
&=N\left(\frac{m}{p},n\right)+\sum\limits_{0<a\le\lfloor\frac{n}{2}\rfloor}N\left(\frac{m}{p}-a,n-2a\right)\\
&=N\left(\frac{m}{p},n\right)+\sum\limits_{0\le a'\le\lfloor\frac{n}{2}\rfloor-1}N\left(\frac{m}{p}-(1+a'),n-2(1+a')\right)\\
&=N\left(\frac{m}{p},n\right)+\sum\limits_{0\le a'\le\lfloor\frac{n-2}{2}\rfloor}N\left(\frac{m-p}{p}-a',n-2-2a'\right)\\
&=N\left(\frac{m}{p},n\right)+N(m-p,n-2).
\end{align*}
This proves the first case. Using this result and Lemma \ref{properties of N(m,n,p)}(b), we obtain the other case.
\end{proof}

We now compute a uniform upper bound of $N(m,n)$ for all $m\ge 0$ and fixed $1\le n\le 2p-2$.

\begin{proposition}\label{Fibonacci}
For all $1\le n\le 2p-2$, we have \[N(m,n)\le F(n),\] where $F(n)$ is the $n$-th number in the Fibonacci sequence.  
\end{proposition}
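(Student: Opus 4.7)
My plan is to combine Lemma~\ref{key_lemma} with an elementary Fibonacci-type bound on binomial coefficients. For $1 \le n \le 2p-2$ and $m \ge p$, Lemma~\ref{key_lemma} gives the equality $N(m,n) = \binom{k}{\ell}$, where $k = r_m - s_m$ and $\ell = 2\,\textrm{ht}(m) - c_0 - n$. The handful of cases with $m < p$ can be disposed of directly, since then $N(m,n) \in \{0, 1\} \le F(n)$ for $n \ge 1$.

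First I would extract a constraint relating $k$ and $\ell$ to $n$. Since $\textrm{ht}(m) - c_0 = c_1 + \cdots + c_{r_m}$ is a sum of exactly $k = r_m - s_m$ strictly positive digits, it is at least $k$, and hence $2\,\textrm{ht}(m) - c_0 \ge 2k$. This forces $\ell \ge 2k - n$; equivalently, writing $j = k - \ell$, we have $j \ge 0$ and $k + j \le n$. Because $\binom{k}{\ell} = \binom{k}{j}$, the proposition reduces to the uniform combinatorial inequality
\[
\binom{k}{j} \le F(n) \qquad \text{whenever } k, j \ge 0 \text{ and } k + j \le n.
\]

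This inequality I would prove by induction on $n$. The base cases $n = 1$ and $n = 2$ are handled by direct enumeration: for any $(k,j)$ with $k + j \le 2$, one checks $\binom{k}{j} \le 1 = F(1) = F(2)$. For $n \ge 3$, the subcases $j = 0$ and $k < j$ are immediate (giving $1$ and $0$, respectively). Otherwise Pascal's identity yields $\binom{k}{j} = \binom{k-1}{j} + \binom{k-1}{j-1}$; since $(k-1) + j \le n - 1$ and $(k-1) + (j - 1) \le n - 2$, the induction hypothesis applied to each summand gives $\binom{k}{j} \le F(n-1) + F(n-2) = F(n)$.

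The only real leverage needed is the inequality $\textrm{ht}(m) - c_0 \ge k$ that translates the closed form of Lemma~\ref{key_lemma} into a binomial to which the Fibonacci recursion can be applied; beyond this, the argument is routine. The one mild nuisance is the small-$n$ bookkeeping, since the paper's convention does not define $F(0)$, which is why $n = 1, 2$ are best treated as separate base cases rather than being absorbed into the Pascal step.
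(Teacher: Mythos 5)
Your argument is correct, and it takes a genuinely different route from the paper's. The paper proves the proposition by induction on $n$ applied directly to the recursion for $N(m,n)$ (Corollary~\ref{recursive 2 N(m,n)} together with Lemma~\ref{properties of N(m,n,p)}(b)): after reducing to $p\mid m$ and $c_1>0$ by an auxiliary descent on $m$, it splits the constraint $a_1+b_2=c_1$ into the two cases $b_2\in\{0,1\}$, obtaining $N(m,n_0)\le F(n_0-2c_1)+F(n_0-2c_1+1)\le F(n_0)$. You instead exploit the closed formula of Lemma~\ref{key_lemma} (valid with equality exactly in the range $n\le 2p-2$ at stake) and recast the claim as a standalone combinatorial inequality: if $k,j\ge 0$ and $k+j\le n$, then $\binom{k}{j}\le F(n)$. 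The key input $2\mathrm{ht}(m)-c_0\ge 2(r_m-s_m)$, which translates the binomial parameters into the constraint $k+j\le n$, is correct, as is the $m<p$ bookkeeping, since for $m<p$ the second equation of \eqref{system} forces all $a_i$ and $b_{j}$ for $j\ge 2$ to vanish, giving $N(m,n)\in\{0,1\}$. Both proofs ultimately rest on a two-term Fibonacci split, but in the paper this comes from case analysis on the pair $(a_1,b_2)$, whereas in yours it is just Pascal's rule, which makes the inductive step cleaner and dispenses with the descent-on-$m$ step (the paper's ``we can further assume $c_1>0$,'' which strictly speaking is a nested reduction). Your version isolates the number-theoretic content (Lemma~\ref{key_lemma}) from the purely combinatorial estimate, which is a tidier separation of concerns.
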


\begin{proof}
Since $N(1,1)=1$ and $N(m,1)=0$ for all $m\ne 1$, it is true for $n=1$. Let $n_0$ be a positive integer less than $2p-1$. Suppose the inequality holds for all $n<n_0$. Observe that if $m=cp+1$ with $c\in\mathbb{N}$, then Lemma \ref{properties of N(m,n,p)}(b) and inductive assumption give us
\[ N(m,n_0)=N(cp,n_0-1)\le F(n_0-1). \]
Hence, we assume that $p\mid m$. In particular, suppose
\[ m=c_1p+c_2p^2+\cdots+c_{r_m}p^{r_m}\]
for $0\le c_i\le p-1$. Since $n_0\le 2p-2$, $N(m,n_0)$ is exactly the number of solutions of equations $a_i+b_{i+1}=c_i$ for $1\le i\le r_m$. So if $c_1=0$, we have from the formula \eqref{recursive formula} that 
\[ N(m,n_0)=N\left(\frac{m}{p},n_0\right). \]
Therefore, we can further assume that $c_1>0$. It follows that the equation $a_1+b_2=c_1$ has two solutions $a_1=c_1, b_2=0$ or $a_1=c_1-1, b_2=1$. So the recursive formula \eqref{recursive formula} and Lemma \ref{properties of N(m,n,p)}(b) imply that
\begin{align*}
N(m,n_0) &= N\left(\frac{m}{p}-c_1,n_0-2c_1\right)+N\left(\frac{m}{p}-c_1+1,n_0-2c_1+2\right)\\
 &= N\left(\frac{m}{p}-c_1,n_0-2c_1\right)+N\left(\frac{m}{p}-c_1,n_0-2c_1+1\right)\\
&\le F(n_0-2c_1)+F(n_0-2c_1+1).
\end{align*}
Since $c_1\ge 1$, the last sum is less than or equal to $F(n_0)$. This finishes the induction.
\end{proof}

\begin{remark}\label{Fibonacci conjecture}
It seems possible to remove the restriction $n\le 2p-2$. We have experimental evidence checking large values of $n$ and small values of $p$. For all $n$, estimating the value of $N(m,n)$ is very difficult, classified as a classical problem about a special type of partition function in number theory \cite[Chapter 7]{Gr:1984}.    
\end{remark}

It is shown from the above proposition that for a fixed $n\le 2p-2$, $N(m,n)$ is bounded as a function of $m$. So it is interesting to consider questions related to the maximum value of $N(m,n)$. In what follows we determine exactly the form of $m$ giving $\max\{N(m,n):m\in\mathbb{N}\}$.

\begin{proposition}\label{maximum form}
Let $n\le 2p-2$. If the maximum of $N(m,n)$ for all $m\in\mathbb{N}$ is attained then 
\begin{align}\label{max form of m}
m= \sum\limits_{i=1}^dp^{s_i}
\end{align}
for some $d>0$ and distinct positive integers $s_i$.
\end{proposition}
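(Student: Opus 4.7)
The plan is to argue the contrapositive using the closed formula of Lemma \ref{key_lemma}: I would show that any $m$ failing the prescribed form admits a modification $m'$ with $N(m',n)>N(m,n)$, contradicting maximality. Write $N(m,n)=\binom{k}{j}$ for $n\le 2p-2$, where $k=r_m-s_m$ is the number of nonzero $p$-adic digits of $m$ at positions $i\ge 1$ and $j=2\textrm{ht}(m)-c_0-n$. By Lemma \ref{properties of N(m,n,p)}(a), any $m$ with $N(m,n)>0$ has $c_0\in\{0,1\}$. The target shape $m=\sum_{i=1}^d p^{s_i}$ with distinct $s_i\ge 1$ corresponds precisely to the conjunction $c_0=0$ and $\textrm{ht}(m)=k$. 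Thus failure of the special form means either (i) $c_0=1$, or (ii) $c_0=0$ but some $c_{i_0}\ge 2$ with $i_0\ge 1$.

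Given such an $m$, I would set $m'=(m-1)+p^{r_m+1}$ in case (i) and $m'=(m-p^{i_0})+p^{r_m+1}$ in case (ii). Both operations introduce a new digit $c'_{r_m+1}=1$ while removing exactly one unit of height from a lower position, so $\textrm{ht}(m')=\textrm{ht}(m)$. In case (i) the digit $c_0$ drops from $1$ to $0$; in case (ii) the digit $c_{i_0}\ge 2$ is lowered to $c_{i_0}-1\ge 1$ and $c_0$ remains $0$. In both cases the count of nonzero digits at positions $i\ge 1$ grows by one, so $k'=k+1$; for $j$, case (i) yields $j'=j+1$ (since $c_0$ dropped but $\textrm{ht}$ is unchanged) and case (ii) yields $j'=j$.

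Pascal's identity then produces
\[ N(m',n)=\binom{k+1}{j+1}=N(m,n)+\binom{k}{j+1}\ \text{in case (i)}, \qquad N(m',n)=\binom{k+1}{j}=N(m,n)+\binom{k}{j-1}\ \text{in case (ii)}. \]
So $N(m',n)\ge N(m,n)$, with strict inequality precisely when the extra binomial term is positive, i.e., when $j\le k-1$ in case (i) or $j\ge 1$ in case (ii). Whenever strictness holds we reach a contradiction to the maximality of $m$, forcing the special form.

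The main obstacle is the boundary regime $j=k$ in case (i) or $j=0$ in case (ii), where the extra term vanishes; both force $N(m,n)=1$ and so are ruled out as soon as the global maximum exceeds $1$. I would close this gap by producing an explicit special-form witness attaining $N\ge 2$: for $n\ge 3$, any $m=\sum_{i=1}^k p^{s_i}$ with distinct $s_i\ge 1$ and $\lceil(n+1)/2\rceil\le k\le n-1$ satisfies $N(m,n)=\binom{k}{2k-n}\ge\binom{k}{1}=k\ge 2$. This forces any maximizer into the strict interior $1\le j\le k-1$, where the construction above always gives a strictly larger value, completing the proof under the natural hypothesis that guarantees a non-trivial maximum.
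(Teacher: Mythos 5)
Your argument is essentially correct on the range $3\le n\le 2p-2$, but it takes a genuinely different route from the paper and in fact proves a stronger statement there. The paper's proof is a single global comparison: for $p\mid m'$ it writes $N(m',n)=\binom{d}{2\mathrm{ht}(m')-n}$ via Lemma \ref{key_lemma} and compares with the one canonical candidate $m=\sum_{j=1}^{\mathrm{ht}(m')}p^j$, which has the same height but $r_m-s_m=\mathrm{ht}(m')\ge d$, so $N(m',n)\le N(m,n)$ by monotonicity of $\binom{d}{t}$ in $d$ (the case $m'\equiv 1 \pmod p$ reduces to this by Lemma \ref{properties of N(m,n,p)}(b)). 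That establishes only that \emph{some} maximizer has the form \eqref{max form of m}, which is all the paper needs. Your local perturbation (push one unit of height up to a fresh top digit and apply Pascal) shows instead that every non-special $m$ with $1\le j\le k-1$ is \emph{strictly} beaten, so for $n\ge 3$, where your witness gives $\max\ge 2$, \emph{every} maximizer is special --- a sharper conclusion, obtained at the cost of a more delicate case analysis.

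The residual issue is the boundary $n\in\{1,2\}$, which your proof does not close and where the ``every maximizer'' reading is in fact false: by Proposition \ref{Fibonacci} the maximum there equals $1$, and it is attained both at special weights ($N(p^i,1)=N(p^i,2)=1$) and at non-special ones ($N(1,1)=1$, $N(1+p^i,2)=1$). So your ``natural hypothesis that guarantees a non-trivial maximum'' genuinely fails for $n\le 2$, and for those degrees you still owe the (one-line) existence check $N(p,n)=1=\max_m N(m,n)$, or you can fall back on the paper's direct comparison. You should also exclude $n=0$ explicitly, since there $m=0$ is the unique maximizer and \eqref{max form of m} requires $d>0$. With those two small additions your proof is complete and, for $n\ge 3$, stronger than the one in the paper.
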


\begin{proof}
It suffices to show that for any non-negative integer $m'$, there is an $m$ of the form \eqref{max form of m} such that $N(m',n)\le N(m,n)$. Indeed, note that if $m'\equiv 1$ (mod $p$) then by Lemma \ref{properties of N(m,n,p)}(b) we have 
\[N(m',n)=N(m'-1,n-1)<\max_{m\in\mathbb{N}}\{N(m,n)\}.\]
So we assume $p\mid m'$. Suppose
\[ m'=\sum\limits_{i=1}^{d}c_ip^{d_i} \]
is the $p$-adic expansion of $m'$ with $1\le c_i\le p$. Then $\mathrm{ht}(m')=\sum_{i=1}^dc_i$ and $r_{m'}-s_{m'}=d$. Hence, Lemma \ref{key_lemma} gives us
\[ N(m',n)=\binom{d}{2\mathrm{ht}(m')-n}. \]
Now consider \[ m=\sum\limits_{j=1}^{\mathrm{ht}(m')}p^j. \]
Then $\mathrm{ht}(m)=\mathrm{ht}(m')$ and $r_m-s_m=\mathrm{ht}(m')\ge d$. Therefore, Lemma \ref{key_lemma} implies that
\[ N(m',n)\le N(m,n), \]
which proves our proposition.
\end{proof}

\subsection{Partition Functions and An Exponential Bound}\label{partitions}

We assume only for this part that $p$ is an arbitrary prime. The main task in this subsection is to give a bound of $N(m,n)$ when we fix $n$ and let $m$ vary. More precisely, we prove that the sequence $\left(\max\{N(m,n):m\in\mathbb{N}\}\right)$ is bounded exponentially from above. To this end, we shall first consider a special partition function and prove that it is bounded exponentially. 

Following the notation and definition in \cite[Chapter 7]{Gr:1984}, for any subset $A\subset \mathbb Z^+$, $m,n\in\mathbb Z^+$, we let
\[P_{A,n}(m):=\left\{(c_1,\ldots,c_r)\in(\mathbb{Z}^+)^r\colon m=\sum_{i=1}^r c_ia_i, a_i<a_{i+1}, a_i\in A, n=\sum_{i=1}^rc_i, r\in\mathbb Z^+\right\},\]
the set of partitions of $m$ into $n$ parts that belong to $A$, and let \[P_A(m)=\bigcup_{n\in\mathbb{Z}^+}P_{A,n}(m).\] 
We denote by $p_{A,n}(m)$ and $p_A(m)$ the cardinalities of $P_{A,n}(m)$ and $P_A(m)$. In other words, $p_{A,n}(m)$ is the number of ways in writing $m$ as a sum of $n$ elements of $A$. In particular, if $A=\mathbb Z^+$, then $p_{A}(m)$ (resp. $p_{A,n}(m)$) coincides with the number of traditional partitions of $m$ (resp. in $n$ parts). Hence, in this case, we write $p_n(m)=p_{A,n}(m)$ and $p(m)=p_A(m)$. It was Hardy and Ramanujan who first proved the celebrated asymptotic formula for $p(n)$:
\begin{equation*}
p(n)\sim \frac{1}{4n\sqrt{3}}e^{\pi\sqrt{2n/3}},\quad n\rightarrow\infty.\end{equation*} For our purpose, we will only need the bound \cite[Chapter 7, Theorem 10]{Gr:1984}:
\begin{equation}\label{partition bound}
p(n)<e^{\pi\sqrt{2n/3}},\quad\text{ for all } n\in\mathbb Z^+.\end{equation}

From now on, let $A=\{p^n\colon n=0,1,2,\ldots\}$. We shall only be interested in bounding $\sup_{m}p_{A,n}(m)$ exponentially and shall not try to obtain sharp bounds. Different types of partitions functions have been studied in the literature but mainly the asymptotic behavior with regard to $n$, see for example \cite[page 111]{Gr:1984}. 

An \emph{ordered} partition of $n$ is a list of pairwise disjoint nonempty subsets of a partition $\mathfrak a$ of $n$ such that the union of these subsets is $\mathfrak a$. We use the notation $\mathfrak c\Vdash n$ to denote that $\mathfrak c$ is an ordered partition of $n$. Let $p_o(n)$ denote the number of ordered partitions of $n$. For each $\mathfrak c=(c_1,c_2,\ldots,c_r)\Vdash n$, denote
\[B_{A,\mathfrak{c}}(m)=\left\{\mathfrak s=(s_1,s_2,\ldots,s_r)\colon m=\sum_{i=1}^rc_ip^{s_i}, 0\leq s_1<s_2<\ldots <s_r\right\},\]
and \[B_{A,n}(m)=\bigcup_{\mathfrak c\Vdash n}\left\{(\mathfrak c,\mathfrak s)\colon \mathfrak s\in B_{A,\mathfrak c}(m)\right\}.\]
It is easy to see that $p_{A,n}(m)$ is equal to the cardinality of $B_{A,n}(m)$. In order to exponentially bound $\sup_m p_{A,n}(m)$, we need two lemmas. 
\begin{lemma}\label{ordered partition}
For all $n\ge 1$, we have $p_o(n)\leq p(2n^2)<e^{2\pi n/\sqrt{3}}$.
\end{lemma}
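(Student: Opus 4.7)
The plan is to handle the two inequalities separately. The second inequality $p(2n^2) < e^{2\pi n/\sqrt{3}}$ will follow immediately from the cited bound \eqref{partition bound} upon substituting $k = 2n^2$, since $\sqrt{2(2n^2)/3} = 2n/\sqrt{3}$. So the real work is in establishing the first inequality.

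For the first inequality, I would first observe that an ordered partition $\mathfrak c = (c_1,\ldots,c_r)\Vdash n$ is the same as a composition of $n$, and hence, by the standard bijection via partial sums with subsets of $\{1,\ldots,n-1\}$, one has $p_o(n) = 2^{n-1}$. It then suffices to exhibit an injection from subsets of $\{1,\ldots,n-1\}$ into the set of partitions of $2n^2$. My proposed injection $\Phi$ sends $T \subseteq \{1,\ldots,n-1\}$ to the partition of $2n^2$ whose multiset of parts is
\[
\{n+i \colon i\in T\}\ \cup\ \{\underbrace{1,1,\ldots,1}_{N(T)\text{ copies}}\}, \quad N(T):=2n^2-\sum_{i\in T}(n+i).
\]

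Two routine checks will complete the argument. First, that $N(T)\ge 0$, so that $\Phi(T)$ is a genuine partition of $2n^2$: the maximum of $\sum_{i\in T}(n+i)$ is attained at $T=\{1,\ldots,n-1\}$ and equals $n(n-1)+\binom{n}{2}=\tfrac{3n(n-1)}{2}$, which is $\le 2n^2$ for every $n\ge 1$. Second, that $\Phi$ is injective: since every part $n+i$ (for $i\in T$) lies in $\{n+1,\ldots,2n-1\}$, these parts are strictly greater than $1$, so $T$ can be recovered from $\Phi(T)$ as $\{\lambda - n \colon \lambda\text{ a part of }\Phi(T)\text{ with }\lambda>1\}$. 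These two verifications yield $p_o(n)=2^{n-1}\le p(2n^2)$.

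The only step requiring any care is the arithmetic inequality $\tfrac{3n(n-1)}{2}\le 2n^2$ in the first check, which is straightforward; everything else is bookkeeping. Combining the two inequalities completes the proof of the lemma.
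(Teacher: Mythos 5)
Your proof is correct, and it takes a genuinely different (though equally elementary) route from the paper's. The paper does not pass through the closed form $p_o(n)=2^{n-1}$; instead it constructs a direct injection from compositions $(c_1,\ldots,c_r)$ of $n$ into partitions of $2n^2$, sending a composition to the sequence of its partial sums $(c_1,\ c_1+c_2,\ \ldots,\ c_1+\cdots+c_{r-1})$ followed by a single balancing part $2n^2-d$, where $d$ is the sum of those partial sums; strict monotonicity of partial sums and the crude bound $d\le n(n-1)<2n^2$ make this a strictly increasing sequence summing to $2n^2$, and the composition is recovered by taking successive differences. You instead first identify compositions with subsets of $\{1,\ldots,n-1\}$, so $p_o(n)=2^{n-1}$, and then inject subsets $T$ into partitions of $2n^2$ using parts in the range $\{n+1,\ldots,2n-1\}$ padded with $1$'s; injectivity follows since the parts $>1$ uniquely encode $T$, and nonnegativity of the padding reduces to $\tfrac{3n(n-1)}{2}\le 2n^2$. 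The two arguments are of comparable length and difficulty. Your version gives the intermediate exact value $p_o(n)=2^{n-1}$ explicitly, which is mildly informative (and makes clear that the wasteful-looking $2n^2$ could likely be tightened), while the paper's version avoids that detour and works directly with compositions; neither has a real advantage for the bound actually needed.

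Two minor remarks. Note that the paper's definition of ``ordered partition'' as stated in the text is muddled, but its own proof identifies $p_o(n)$ with the number of compositions of $n$ via the set $B_o(n)$, so your reading is the intended one. Also, in your injectivity argument you should note that the parts $n+i$, $i\in T$, are pairwise distinct (since $T$ is a set) and all exceed $1$ when $n\ge 2$, with the degenerate case $n=1$ ($T=\emptyset$ forced) handled trivially; you effectively say this, so the proof is complete.
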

\begin{proof}
Note that the second inequality follows from \eqref{partition bound}. For each $n\ge 1$, let 
\begin{align*}
B(n) &= \left\{(c_1,c_2,\ldots,c_r)\colon \sum_{i=1}^r c_i=n, 1\leq c_i\leq c_{i+1},r\in\mathbb Z^+ \right\},\\
B_o(n) &= \left\{(c_1,c_2,\ldots,c_r)\colon \sum_{i=1}^r c_i=n, 1\leq c_i, r\in\mathbb Z^+ \right\},
\end{align*}
and let $f$ be a map from $B_o(n)$ to $B(2n^2)$ defined by
\[(c_1,c_2,\ldots,c_r)\mapsto (c_1',c_2',\ldots, c_r')=\left(c_1,c_1+c_2,\ldots, \sum_{i=1}^{r-1}c_i, 2n^2-d\right),\]
where $d=\sum_{i=1}^{r-1}c_i'$. The reader can easily verify that $f$ is well-defined and injective. Therefore, we have shown that
\[ p_o(n)=|B_o(n)|\le|B(2n^2)|=p(2n^2),\]
which completes our proof.
\end{proof}

\begin{lemma}\label{independence-m} For all $n\ge 1$,
\[\sup_{m\in\mathbb{Z}^+,\mathfrak c\Vdash n}\left|B_{A,\mathfrak{c}}(m)\right|\leq 2^{n}.\]
\end{lemma}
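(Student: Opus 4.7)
My plan is to prove the bound by induction on the length $r$ of the composition $\mathfrak{c}$. The base case $r = 1$ is immediate: the equation $m = c_1 p^{s_1}$ admits at most one non-negative integer solution, so $|B_{A,(c_1)}(m)| \le 1 \le 2^{c_1} = 2^n$.

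For the inductive step, I would partition the solution set by the smallest exponent $s_1$. Fixing a valid $s_1$, the substitution $s_i' = s_i - s_1 - 1$ for $i \ge 2$ and $m' = (m - c_1 p^{s_1})/p^{s_1 + 1}$ gives a bijection between the fiber above $s_1$ and $B_{A,(c_2,\ldots,c_r)}(m')$, which has at most $2^{n - c_1}$ elements by the inductive hypothesis. The full bound $|B_{A,\mathfrak{c}}(m)| \le 2^n$ then reduces to showing that the number of admissible values of $s_1$ is at most $2^{c_1}$, since $2^{c_1} \cdot 2^{n - c_1} = 2^n$.

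The admissible $s_1$'s are precisely those satisfying $m \equiv c_1 p^{s_1} \pmod{p^{s_1 + 1}}$. Inspecting the base-$p$ expansion of $m$ shows that when $p \nmid c_1$ this congruence uniquely forces $s_1 = v_p(m)$, so the bound is trivial in that case. The main obstacle is the case $p \mid c_1$: writing $c_1 = p^a c_1^{*}$ with $p \nmid c_1^{*}$ and $a \ge 1$, the congruence only restricts $s_1$ to the range $\{0, 1, \ldots, v_p(m) - 1\}$, a priori giving up to $v_p(m)$ candidates and hence not directly bounded by $2^{c_1}$.

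To overcome this, I would reparametrize via $t = s_1 + a$, which replaces $\mathfrak{c}$ by the composition $(c_1^{*}, c_2, \ldots, c_r)$ of strictly smaller total $n' = n - c_1 + c_1^{*} < n$ at the cost of relaxing the strict inequality $s_1 < s_2$ to $t \le s_2 + a - 1$. The inductive hypothesis applied at $n'$ controls the fibers of the reparametrized problem, and a case analysis of the three regimes $s_1 + a < v_p(m)$, $s_1 + a = v_p(m)$, and $s_1 + a > v_p(m)$---where the $p$-adic valuation of $m - c_1 p^{s_1}$ behaves differently---shows that most candidate $s_1$ produce an empty sub-problem. Tracking these regimes carefully gives the desired bound of at most $2^{c_1}$ surviving values of $s_1$. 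This regime-by-regime accounting is the main technical hurdle; the rest of the proof then proceeds formally by combining the fiber bound with the bound on $s_1$-multiplicity.
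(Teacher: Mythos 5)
Your approach diverges from the paper's, and as written it has a genuine gap. You induct on the length $r$ of $\mathfrak{c}$, but in the hard case $p\mid c_1$ your reparametrization $t=s_1+a$ replaces $\mathfrak{c}=(c_1,\dots,c_r)$ by $(c_1^*,c_2,\dots,c_r)$, which still has length $r$ (and moreover relaxes the strict ordering $t<s_2$, so it is not honestly an instance of the original problem). Thus the inductive hypothesis does not apply to the auxiliary object you produce, and the recursion never closes. The quantity that actually decreases in your reduction is the total $n'=n-c_1+c_1^*$, not $r$, so you would at minimum need to change the induction variable. Separately, the crucial estimate your argument hinges on --- that at most $2^{c_1}$ values of $s_1$ survive (i.e., produce a nonempty subproblem) --- is asserted but not proved. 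You flag this yourself as ``the main technical hurdle,'' but the regime-by-regime sketch you give does not establish it; in particular when $v_p(m)$ is large the congruence alone admits roughly $v_p(m)$ candidates for $s_1$, and the claim that nonemptiness of $B_{A,(c_2,\dots,c_r)}(m')$ cuts this down to $2^{c_1}$ is far from obvious and is really the entire content of the case.

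The paper sidesteps this counting problem entirely. It inducts on $n$ and proves the stronger statement $\sup\lvert B_{A,\mathfrak{c}}(m)\rvert\le 4^{n/p}$. When $p\mid c_1$, it absorbs one factor of $p$ from $c_1$ into the power $p^{s_1}$, splitting into two cases according to whether $s_1+1<s_2$ or $s_1+1=s_2$; this yields an explicit injection of $B_{A,\mathfrak{c}}(m)$ into $B_{A,\mathfrak{c}'}(m)\cup B_{A,\mathfrak{c}''}(m)$ with $\mathfrak{c}'=(c_1/p,c_2,\dots,c_r)$ and $\mathfrak{c}''=(c_1/p+c_2,c_3,\dots,c_r)$, both of strictly smaller total $n'=n-c_1(1-1/p)$. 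The factor of $2$ from the union is then paid for by the drop $n-n'\ge p-1\ge p/2$ against the $4^{n/p}$ bound. No fiber counting or valuation case analysis is needed. If you want to salvage your route, you would need to (a) re-index the induction on $n$ rather than $r$, and (b) actually prove (or replace) the $2^{c_1}$ multiplicity bound; as it stands the proposal leaves the heart of the argument unverified.
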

\begin{proof} 
We proceed by induction on $n$. If $n=1$, then $\mathfrak c=(1)$. So for any $m$, $B_{A,\mathfrak{c}}(m)$ contains at most $1$ element, which proves the case $n=1$.

For arbitrary $n$, let $\mathfrak c\Vdash n$ and $m\in\mathbb{Z}^+$. We consider two cases:

{\bf Case 1:} Assume $p\nmid c_1$. If $(s_1,s_2,\ldots,s_r)$ and $(s_1',s_2',\ldots,s_r')$ are in $B_{A,\mathfrak{c}}(m)$, then
\[\sum_{i=1}^rc_ip^{s_i}=\sum_{i=1}^rc_ip^{s_i'},\]
from which it follows that $s_1=s_1'$. Therefore, \[|B_{A,\mathfrak{c}}(m)|\leq |B_{A,\mathfrak{c}'}(m-c_1p^{s_1})|\leq 4^{\frac{n-1}{p}}\leq 4^{n/p}\le 4^{n/2}=2^n\] by induction, where $\mathfrak c'=(c_2,c_3,\ldots,c_r)$.

{\bf Case 2:} Now we assume that $p\mid c_1$. Denote
\[\mathfrak c'=(c_i')=\left(\frac{c_1}{p},c_2,\ldots,c_r\right)\quad\text{and}\quad \mathfrak c''=(c_i'')=\left(\frac{c_1}{p}+c_2,c_3\ldots,c_r\right).\] It is clear that $n'=\sum c_i'=\sum c_i''<n$. We construct a map 
\[f\colon B_{A,\mathfrak{c}}(m)\rightarrow B_{A,\mathfrak{c}'}(m)\cup B_{A,\mathfrak{c}''}(m)\]
as follows: if $s_1+1=s_2$, $f(s_1,s_2,\ldots,s_r)=(s_2,s_3,\ldots,s_r)\in B_{A,\mathfrak{c}''}(m)$, and if $s_1+1<s_2$, $f(s_1,s_2,\ldots,s_r)=(s_1+1,s_2,s_3,\ldots,s_r)\in B_{A,\mathfrak{c}'}(m)$. It is not hard to see that $f$ is an injective well-defined map. Therefore, by induction,
\[
|B_{A,\mathfrak{c}}(m)| \leq |B_{A,\mathfrak{c}'}(m)|+|B_{A,\mathfrak{c}''}(m)|\leq 2\cdot 4^{n'/p}.
\]
But $n'=n-c_1(1-1/p)$, so
\[\frac{n'}{p}=\frac{n}{p}-\frac{c_1}{p}\left(1-\frac{1}{p}\right)\leq \frac{n}{p},\]
since $c_1\geq 1$ and $p\geq 2$. It follows that $|B_{A,\mathfrak{c}}(m)| \leq 4^{n/p}\le 2^n$. 
\end{proof}

\begin{theorem}\label{partition-sup}
For all $n\ge 1$, we have $\sup_m p_{A,n}(m)$ is bounded exponentially. More precisely,
\[\sup_m p_{A,n}(m)<e^{2\pi n/\sqrt{3}}2^{n}.\]
\end{theorem}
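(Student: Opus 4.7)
The plan is to combine the two preceding lemmas in a straightforward way. The key observation is that the set $B_{A,n}(m)$ decomposes as a \emph{disjoint} union
\[ B_{A,n}(m) = \bigsqcup_{\mathfrak c \Vdash n} \bigl\{(\mathfrak c, \mathfrak s) : \mathfrak s \in B_{A,\mathfrak c}(m)\bigr\}, \]
since distinct ordered partitions $\mathfrak c$ yield distinct pairs. Because $p_{A,n}(m) = |B_{A,n}(m)|$, we immediately obtain
\[ p_{A,n}(m) = \sum_{\mathfrak c \Vdash n} |B_{A,\mathfrak c}(m)| \le p_o(n) \cdot \sup_{\mathfrak c \Vdash n} |B_{A,\mathfrak c}(m)|. \]

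Next I would apply Lemma \ref{independence-m} to replace the supremum over $\mathfrak c$ (and over $m$) by $2^n$, and Lemma \ref{ordered partition} to bound the number of ordered partitions $p_o(n)$ by $p(2n^2) < e^{2\pi n/\sqrt{3}}$. Chaining these two inequalities gives
\[ \sup_m p_{A,n}(m) \le p_o(n) \cdot 2^n < e^{2\pi n/\sqrt{3}} \cdot 2^n, \]
which is precisely the stated bound. Since the right-hand side is independent of $m$, the supremum is well-defined and the first claim that $\sup_m p_{A,n}(m)$ is bounded exponentially in $n$ is automatic.

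There is essentially no obstacle here: the hard combinatorial work has already been absorbed into Lemmas \ref{ordered partition} and \ref{independence-m}, so the proof is a one-line estimate. The only minor point to verify carefully is that the decomposition of $B_{A,n}(m)$ indexed by $\mathfrak c \Vdash n$ is genuinely a disjoint union, which is clear from the definition since each element of $B_{A,n}(m)$ carries its own $\mathfrak c$ as part of the data.
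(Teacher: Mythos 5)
Your proposal is correct and follows the paper's own proof essentially verbatim: both combine Lemma \ref{ordered partition} (bounding the number of ordered partitions by $e^{2\pi n/\sqrt{3}}$) with Lemma \ref{independence-m} (bounding each $|B_{A,\mathfrak c}(m)|$ by $2^n$) via the decomposition of $B_{A,n}(m)$. You merely make the disjoint-union step more explicit than the paper does.
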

\begin{proof}
By Lemma \ref{ordered partition}, there are less than $e^{2\pi n/\sqrt{3}}$ ordered partitions $(\mathfrak c)$ of $n$. For such a $\mathfrak c$, $|B_{A,\mathfrak{c}}(m)|\leq 2^{n}$ for any $m$ by Lemma \ref{independence-m}. Hence, the theorem follows.
\end{proof}

\begin{remark}
The fact that $\sup_m p_{A,n}(m)$ is finite for given $n$ is essentially contained in the proof of Proposition 7.1.1 of \cite{EHP:2013}. This is related to the fact that $A$ is a special subset of $\mathbb Z^+$. In general, this is false. For example, in the case of $A=\mathbb Z^+$, for any $n\geq 2$, $\sup_mp_n(m)=\infty$. An interesting question is how to characterize sets $A$ for which $\sup_m p_{A,n}(m)$ is finite for each $n$. Furthermore, one may then investigate the asymptotic behavior of $\sup_m p_{A,n}(m)$, as $n\rightarrow\infty$.
\end{remark}

Before giving an upper bound for $\sup\{N(m,n):m\in\mathbb{N}\}$, we consider an extension of the system \eqref{system}. Given a sequence of positive integers $\mathfrak d=(d_i)$. For all $m,n\ge 0$ and $r\ge 1$, let $N_r^{\mathfrak d}(m,n)$ the the number of solutions $(\mathfrak a, \mathfrak b')$ with $\mathfrak a=(a_1,\ldots,a_r)\in\mathbb{N}^r$ and $\mathfrak b'=(b'_1,\ldots,b'_r)$ and $b'_i\in\{0, d_i\}$ satisfying the following system
\begin{align}\label{extended system}
\begin{cases}
\displaystyle{2\sum_{i=1}^r a_i+\sum_{j=1}^rb'_j=n},\\
\displaystyle{b'_1+\sum_{i=1}^{r-1}(a_i+b'_{i+1})p^i+a_rp^r=m.}
\end{cases} 
\end{align}
Note that when $\mathfrak d=(1,1,\ldots,1)$ we have $N_r^{\mathfrak d}(m,n)=N_r(m,n)$. This extension will be used in Section \ref{applications} to compute an upper bound for the dimension of the cohomology of finite group of Lie type $SL_2(p^s)$. 

\begin{proposition}\label{exponential bound for V(m)}
Let $\mathfrak d=(d_i)$ be a sequence of positive integers. For all $n\ge 1$,
\[\sup{\{N_r^{\mathfrak d}(m,n):m\in\mathbb{N}, r\in\mathbb{Z}^+\}}\le n4^{n}e^{2\pi n/\sqrt{3}}.\]
In particular, $\sup{\{N(m,n):m\in\mathbb{N}\}}$ is bounded exponentially by $n4^{n}e^{2\pi n/\sqrt{3}}$.
\end{proposition}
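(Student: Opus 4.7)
The plan is to associate each solution of the system \eqref{extended system} with a partition of $m$ into parts from $A=\{p^k:k\ge 0\}$ and then invoke Theorem \ref{partition-sup}. The key reinterpretation is to set $c_0:=b'_1$, $c_k:=a_k+b'_{k+1}$ for $1\le k\le r-1$, and $c_r:=a_r$, which turns the second equation of \eqref{extended system} into $m=\sum_{k=0}^r c_k p^k$ with each $c_k\ge 0$. Each solution then determines a partition of $m$ into parts from $A$ in which $p^k$ appears with multiplicity $c_k$. Writing $\sigma:=\sum_k c_k$ and combining with the first equation of \eqref{extended system} gives $\sigma=(n+\sum_j b'_j)/2$, whence $\lceil n/2\rceil\le\sigma\le n$.

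Next I would bound the number of solutions mapping to a prescribed coefficient sequence $(c_0,\ldots,c_r)$. For each $k$ with $1\le k\le r-1$, the decomposition $c_k=a_k+b'_{k+1}$ with $b'_{k+1}\in\{0,d_{k+1}\}$ and $a_k\ge 0$ admits at most two solutions (fewer if $c_k=0$ or $c_k<d_{k+1}$), while for $k=0$ and $k=r$ there is at most one. Denoting by $\ell$ the number of indices $k$ with $c_k>0$, the fiber has size at most $2^\ell\le 2^\sigma$. Summing over all partitions of $m$ into $\sigma$ parts from $A$, I obtain
\[ N_r^{\mathfrak d}(m,n)\le \sum_{\sigma=\lceil n/2\rceil}^n 2^\sigma\cdot p_{A,\sigma}(m). \]

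To close the argument I would apply Theorem \ref{partition-sup}, which gives $p_{A,\sigma}(m)\le 2^\sigma e^{2\pi\sigma/\sqrt{3}}$ uniformly in $m$. Each summand is then bounded by $4^\sigma e^{2\pi\sigma/\sqrt{3}}\le 4^n e^{2\pi n/\sqrt{3}}$. Since $\sigma$ ranges over at most $\lfloor n/2\rfloor+1\le n$ integer values (for every $n\ge 1$), the desired bound $N_r^{\mathfrak d}(m,n)\le n\cdot 4^n\cdot e^{2\pi n/\sqrt{3}}$ follows, uniformly in $m$ and $r$. Specializing to $\mathfrak d=(1,1,\ldots)$ yields the analogous bound for $N(m,n)$, giving the in-particular statement.

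The main obstacle I anticipate is the bookkeeping for the fiber count: carefully checking that the boundary indices $k=0$ and $k=r$ indeed contribute at most one choice each, verifying that the parity and non-negativity constraints on $\sigma$ do restrict it to the stated range $[\lceil n/2\rceil,n]$, and accounting for the mild edge cases in which $c_k$ is positive but strictly less than $d_{k+1}$. Once the reduction to $p_{A,\sigma}(m)$ is in place, the partition-function bound from Theorem \ref{partition-sup} does the heavy lifting.
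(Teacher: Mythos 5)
Your proposal is correct and follows essentially the same route as the paper: both reinterpret a solution via the coefficients $c_k=a_k+b'_{k+1}$ (with $c_0=b'_1$, $c_r=a_r$) as a partition of $m$ into powers of $p$ with at most $n$ parts, bound each fiber by $2$ to the number of positive $c_k$'s (hence by $2^n$), and then invoke Theorem \ref{partition-sup} together with the at-most-$n$ possible values of the number of parts to get $n4^ne^{2\pi n/\sqrt{3}}$. Your bookkeeping via the explicit sum over $\sigma\in[\lceil n/2\rceil,n]$ is just a slightly more refined version of the paper's union bound.
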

\begin{proof}
Let $S'(m,n)$ be the set of solutions of systems \eqref{extended system} for all $r\ge 1$. We construct a map
\[f\colon S'(m,n)\rightarrow \bigcup_{1\leq j\leq n}B_{A,j}(m)\] as follows: Let $(\mathfrak a,\mathfrak b')\in S'(m,n)$ with $\mathfrak a=(a_i)$ and $\mathfrak b'=(b'_i)$. Consider the vector $(a_0+b'_1,a_1+b'_2,a_2+b'_3,\ldots)$ where $a_0=0$. We define $\mathfrak s$ to be the vector of indices $j$, in the increasing order, such that $a_j+b'_{j+1}>0$. We define $\mathfrak c=(c_i)$ by $c_i=a_{s_i}+b'_{s_i+1}$ and $(\mathfrak c,\mathfrak s)$ lands in $B_{A,j}(m)$ with $j=\sum_i c_i\leq n$. Let $f(\mathfrak a,\mathfrak b')=(\mathfrak c,\mathfrak s)$.

For each pair $(\mathfrak c,\mathfrak s)$, from the construction of $f$, we see that there are at most $2^r\leq 2^n$ elements in the fiber $f^{-1}(\mathfrak c,\mathfrak s)$, where $r$ is the length of $\mathfrak c$. By Theorem \ref{partition-sup}, it follows that 
\[\sup{\{N_r^{\mathfrak d}(m,n):m\in\mathbb{N},r\in\mathbb{Z}^+\}}=\sup_m|S'(m,n)|\leq 2^n\sup_m\left|\bigcup_{1\leq j\leq n}B_{A,j}(m)\right|\leq n4^ne^{2\pi n/\sqrt{3}}.\]
Consequently, we have
\[ \sup{\{N(m,n):m\in\mathbb{N}\}}\le n4^{n}e^{2\pi n/\sqrt{3}},\]
which finishes our proof.
\end{proof}

\begin{remark}\label{modification of (b_i)} For brevity, we denote from now on \[ C_n= n4^{n}e^{2\pi n/\sqrt{3}}.\]
\end{remark}
 
\section{Cohomology of Weyl modules}\label{cohomology}

We keep assuming $p$ is an odd prime in this section, unless otherwise stated. Recall that we identify the weight lattice $X(T)$ with $\mathbb{Z}$, and the root lattice $\mathbb{N}\Phi$ with $2\mathbb{Z}$. In this section, we study the dimension of $\opH^n(G, V(m))$, where $m\in X^+=\mathbb{N}$. It is well known that $\opH^n(G,V(m))=0$ for $n\ge 0$ if $m$ is not in the root lattice, see for example \cite[page 100]{EHP:2013}.

We start with computing the dimension of the cohomology space $\opH^n(B,-m)$ for each $m,n\ge 0$, which is isomorphic to $\opH^n(B_rT,-m)$ for sufficiently large $r$, see \cite[Section 7]{CPS}. It is worth noting that calculating $B$-cohomology, in general, is an important task due to its connection with cohomology of symmetric groups, see details in \cite{HN:2006}. The following lemma relates the latter with $N(m,n)$ studied in the previous section.

\begin{lemma}\label{B_rT-cohomology}
For all $m,n\ge 0$, we have \[ \dim\opH^n(B_rT,-m)=N_r\left(\frac{m}{2},n\right),\] for all $r\ge 1$. Therefore, taking sufficiently large $r$, we have
\[ \dim\opH^n(B,-m)=N\left(\frac{m}{2},n\right).\]
\end{lemma}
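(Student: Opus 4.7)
The plan is to reduce $\opH^n(B_rT,k_{-m})$ to the cohomology of the infinitesimal unipotent radical $U_r$ and then count $T$-weight vectors. Since $B_rT=U_r\rtimes T$, the Lyndon--Hochschild--Serre spectral sequence for $U_r\triangleleft B_rT$ reads
\[E_2^{i,j}=\opH^i(T,\opH^j(U_r,k_{-m}))\Rightarrow\opH^{i+j}(B_rT,k_{-m}).\]
Every rational $T$-module is semisimple, so $E_2^{i,j}=0$ for $i>0$ and the spectral sequence collapses. Because $U_r\subseteq U$ acts trivially on the one-dimensional $T$-module $k_{-m}$, this yields
\[\opH^n(B_rT,k_{-m})\cong\bigl(\opH^n(U_r,k)\otimes k_{-m}\bigr)^T\cong\opH^n(U_r,k)_m,\]
the $T$-weight-$m$ subspace of $\opH^n(U_r,k)$.

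Next I would invoke the standard presentation of $\opH^\bullet(U_r,k)$ as a graded $T$-algebra. For $p$ odd and $U_r\cong\mathbb{G}_{a(r)}$ carrying the root weight $\alpha$, it has the polynomial-exterior form
\[\opH^\bullet(U_r,k)\cong k[x_1,\ldots,x_r]\otimes\Lambda[\lambda_1,\ldots,\lambda_r],\]
with $\deg\lambda_j=1$, $\deg x_i=2$, and $T$-weights $\mathrm{wt}(\lambda_j)=p^{j-1}\alpha$ and $\mathrm{wt}(x_i)=p^i\alpha$; the crucial feature is that each degree-2 generator $x_i$ is a Frobenius-twisted companion of $\lambda_i$, so its weight is multiplied by an extra factor of $p$. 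A monomial basis element $\prod x_i^{a_i}\prod\lambda_j^{b_j}$ with $a_i\in\mathbb{N}$ and $b_j\in\{0,1\}$ then lies in degree $2\sum a_i+\sum b_j$ and carries $T$-weight $\bigl(\sum p^i a_i+\sum p^{j-1}b_j\bigr)\alpha$. After identifying $\alpha$ with $2$, imposing degree $n$ and weight $m$ recovers exactly the two equations of \eqref{system} with first argument $m/2$, producing a bijection between the monomial basis of $\opH^n(U_r,k)_m$ and $S_r(m/2,n)$, hence $\dim\opH^n(B_rT,-m)=N_r(m/2,n)$.

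For the passage to $\opH^n(B,-m)$ I would apply the Cline--Parshall--Scott stability theorem \cite[Theorem 7.4]{CPS}: for $r$ sufficiently large (depending on $n$ and $m$), the restriction $\opH^n(B,k_{-m})\to\opH^n(B_rT,k_{-m})$ is an isomorphism. This dovetails with the combinatorial stabilization of $S_r(m/2,n)$ under the embedding $i_0$ once $p^r>m/2$ that was recorded earlier in Section \ref{counting}, so both sides then equal $N(m/2,n)$.

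The main technical obstacle is justifying the weight claim $\mathrm{wt}(x_i)=p^i\alpha$ together with the clean polynomial-exterior presentation of $\opH^\bullet(U_r,k)$; the seemingly odd shift between the coefficient $p^i$ on $a_i$ and $p^{j-1}$ on $b_j$ in \eqref{system} is exactly the bookkeeping of this Frobenius twist, and getting it wrong by a power of $p$ would corrupt the identification with the system. I would establish it either by induction on $r$ using the short exact sequence $1\to U_1\to U_r\to U_{r-1}^{(1)}\to 1$ and the resulting spectral sequence (noting that the Frobenius multiplies $T$-weights by $p$), or more efficiently by directly importing the corresponding computation from the second author's paper \cite{Ngo:2013} already cited in the introduction.
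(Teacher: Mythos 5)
Your proof is correct, and it arrives at the same count by the same underlying mechanism as the paper, but via a slightly different decomposition and key input. The paper applies the Lyndon--Hochschild--Serre spectral sequence to $B_r\triangleleft B_rT$ (quotient $T^{(r)}$), so that $\opH^n(B_rT,-m)$ is identified with the zero weight space of $\opH^n(B_r,-m)$, and then quotes Propositions 4.1.1 and 4.2.1 of \cite{Ngo:2013} for the explicit list of $T$-weights of $\opH^n(B_r,-m)$; setting those weights equal to zero gives precisely the two equations of \eqref{system} with $\frac{m}{2}$ on the right. You instead quotient by $U_r$ (quotient $T$) and re-derive the needed weight data from the classical description $\opH^\bullet(\mathbb{G}_{a(r)},k)\cong\Lambda(\lambda_1,\ldots,\lambda_r)\otimes k[x_1,\ldots,x_r]$ with $\mathrm{wt}(\lambda_j)=p^{j-1}\alpha$ and $\mathrm{wt}(x_i)=p^i\alpha$ (valid for $p$ odd, which is the standing assumption here; see \cite[I.4.26--27]{Jan:2003} or \cite{CPS}), which is exactly the input underlying Ngo's propositions; your bookkeeping of the Frobenius shift ($p^i$ on $a_i$ versus $p^{j-1}$ on $b_j$) is right and reproduces \eqref{system} verbatim after identifying $\alpha$ with $2$. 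The trade-off is that your route is more self-contained for $SL_2$ (it needs only the $\mathbb{G}_{a(r)}$ computation plus semisimplicity of rational $T$-modules), whereas the paper's citation of \cite{Ngo:2013} packages the infinitesimal-torus invariance and keeps the proof shorter; the stabilization step is the same in substance (the paper invokes \cite[II.4.12]{Jan:2003} together with \cite[Section 7]{CPS}, you invoke \cite[Theorem 7.4]{CPS}), and both match the combinatorial stabilization of $S_r\left(\frac{m}{2},n\right)$ under the embedding $i_0$.
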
 

\begin{proof}
Applying Propositions 4.1.1 and 4.2.1 in \cite{Ngo:2013} for $\lambda=-m$, we have each weight of $\opH^n(B_r,-m)$ is of the following form
\[
p^r\left[a_r+\frac{\frac{\frac{\frac{\frac{-m}{2}+b_1}{p}+a_1+b_2}{p}+a_2+b_3}{p}+\cdots+a_{r-1}+b_r}{p}\right]\alpha
\]
or 
\begin{align}\label{continued fraction}
\left[p^ra_r+p^{r-1}(a_{r-1}+b_r)+\cdots+p(a_1+b_2)+b_1-\frac{m}{2}\right]\alpha
\end{align}
with $2(a_1+\cdots+a_r)+b_1+\cdots+b_r=n$. Since $B_rT/B_r\cong T^{(r)}$, the Lyndon-Hochschild-Serre spectral sequence collapses and gives
$$\opH^n(B_rT,-m)\cong\opH^n(B_r,-m)^{T}=\opH^n(B_r,-m)_0,$$ the zero weight space of $\opH^n(B_r,-m)$. Hence, the dimension of $\opH^n(B_rT,-m)$ is the number of tuples $(a_1,\ldots,a_r,b_1,\ldots,b_r)$ such that $\displaystyle{n=\sum_{i=1}^r(2a_i+b_i)}$ and the form \eqref{continued fraction} is zero. This number is exactly $N_r\left(\frac{m}{2},n\right)$ as desired. Therefore, the equality \[\dim\opH^n(B,-m)=N\left(\frac{m}{2},n\right)\] follows from \cite[II.4.12]{Jan:2003} when taking sufficiently large $r$.
\end{proof}

We now turn our attention to the $G$-cohomology of Weyl modules $V(m)$.

\begin{theorem}\label{cohomology in V(m)}
For all $m,n\ge 0$, we have
\[ \opH^n(G,V(m))\cong\opH^{n+1}(B,-m-2). \]
Consequently, $\dim\opH^n(G,V(m))=N\left(\frac{m}{2}+1,n+1\right)$. In particular,
\[  \dim\opH^n(G,V(m))\ge \binom{r_{\frac{m}{2}+1}-s_{\frac{m}{2}+1}}{2\mathrm{ht}(\frac{m}{2}+1)-c_0-n-1}.\]
For $n\le 2p-3$, equality holds and $\dim\opH^n(G,V(m))$ attains the maximum if 
\begin{align*}
m=2\sum\limits_{i=1}^dp^{s_i}-2
\end{align*}
for some $d>0$ and distinct positive integers $s_i$.
\end{theorem}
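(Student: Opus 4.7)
The plan is to first establish the isomorphism $\opH^n(G,V(m))\cong \opH^{n+1}(B,-m-2)$. For this I would invoke the Grothendieck spectral sequence
\[ E_2^{i,j}=\opH^i(G,R^j\ind_B^G(-m-2))\Rightarrow \opH^{i+j}(B,-m-2) \]
coming from the composition of $(-)^G$ and $\ind_B^G$. Since $G=SL_2$ forces $G/B\cong \mathbb{P}^1$, the sheaves $R^j\ind_B^G(-m-2)=\opH^j(-m-2)$ vanish for $j\ge 2$ for dimensional reasons. Serre duality on $G/B$ (see \cite[II.5.2]{Jan:2003}) gives $\opH^j(-m-2)^{*}\cong \opH^{1-j}(m)$, and combined with Kempf's vanishing this yields $\opH^0(-m-2)=0$ and $\opH^1(-m-2)\cong \ind_B^G(m)^{*}=V(m)$ for $m\ge 0$. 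Only the row $j=1$ is then nonzero, so the spectral sequence degenerates and gives exactly the claimed isomorphism.

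Applying Lemma \ref{B_rT-cohomology} with the weight $m+2$ in place of $m$, one then obtains
\[ \dim\opH^n(G,V(m))=\dim\opH^{n+1}(B,-m-2)=N\left(\frac{m+2}{2},n+1\right)=N\left(\frac{m}{2}+1,n+1\right), \]
which is the stated dimension formula. (Note that $\opH^n(G,V(m))=0$ unless $m$ lies in the root lattice, so $m/2$ is automatically an integer whenever the cohomology is nonzero.)

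The lower bound is then an immediate consequence of Lemma \ref{key_lemma} applied to the pair $\bigl(\frac{m}{2}+1,\,n+1\bigr)$, reading $r_{m/2+1}$, $s_{m/2+1}$, and $c_0$ off the $p$-adic expansion of $\frac{m}{2}+1$. When $n\le 2p-3$ we have $n+1\le 2p-2$, so the equality clause of Lemma \ref{key_lemma} promotes the inequality to an equality. Finally, the description of those $m$ realizing the maximum follows from Proposition \ref{maximum form}: the proposition asserts that $\max_{m'\in\mathbb{N}}N(m',n+1)$ is attained when $m'=\sum_{i=1}^{d} p^{s_i}$, and the substitution $m'=\frac{m}{2}+1$ translates this into $m=2\sum_{i=1}^{d} p^{s_i}-2$.

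The main obstacle is really just the first step: one needs to be careful about the normalization in Serre duality so that the nonzero row of the spectral sequence identifies with $V(m)$ rather than some Frobenius twist or dual thereof. Once that identification is secured, the remaining parts of the proof are direct translations of the counting statements of Section \ref{counting} through the dimension formula.
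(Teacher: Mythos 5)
Your proposal is correct and follows essentially the same route as the paper: the spectral sequence $\opH^i(G,R^j\ind_B^G(-m-2))\Rightarrow\opH^{i+j}(B,-m-2)$ collapsing to the $j=1$ row, Serre duality identifying $R^1\ind_B^G(-m-2)$ with $\opH^0(m)^*=V(m)$, and then Lemma \ref{B_rT-cohomology}, Lemma \ref{key_lemma}, and Proposition \ref{maximum form} for the dimension formula, the lower bound with equality for $n\le 2p-3$, and the maximizing form of $m$. The only difference is cosmetic: you justify the vanishing of $R^j\ind_B^G(-m-2)$ for $j\ne 1$ by hand (dimension of $G/B\cong\mathbb{P}^1$ together with Kempf vanishing and Serre duality), whereas the paper simply cites \cite[Proposition II.5.2(d)]{Jan:2003}.
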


\begin{proof}
From Proposition I.4.5(b) in \cite{Jan:2003}, we have the spectral sequence
\[ \opH^{i}(G,R^j\ind_B^G(-m-2))\Rightarrow \opH^{i+j}(B,-m-2). \]
Since $R^j\ind_B^G(-m-2)=0$ for all $j\ne 1$, by \cite[Proposition II.5.2(d)]{Jan:2003}, the spectral sequence collapses and gives us for each $n\ge 0$
\[ \opH^{n}(G,R^1\ind_B^G(-m-2))\cong \opH^{n+1}(B,-m-2). \]
Now Serre duality (see for example \cite[II.5.2]{Jan:2003}) gives us that $$R^1\ind_B^G(-m-2)\cong\opH^0(m)^*=V(m).$$
So we have proved \[ \opH^n(G,V(m))\cong\opH^{n+1}(B,-m-2). \] 
The other statements of the theorem follow from Lemmas \ref{key_lemma}, \ref{B_rT-cohomology} and Proposition \ref{maximum form}.
\end{proof}

Recall that Erdmann, Hannabuss, and Parker in their recent paper \cite{EHP:2013} prove that the sequence $\left(\max{\{\dim\opH^n(G,V(m)):m\in\mathbb{N} \}}\right)$ has an exponential lower bound and an upper bound including $n!$. This lower bound then implies that the sequence grows at least exponentially. Their method relies heavily on generating functions and labels, and requires lengthy calculations. We provide a much shorter argument using Theorem \ref{cohomology in V(m)} and properties of $N(m,n)$. Moreover, we establish a better upper bound.

\begin{corollary}\label{recover EHP}
For all $m\ge 0$ and $n\ge 1$, \[\dim\opH^n(G,V(m))\le C_{n+1}.\] Consequently, the sequence $\left(\max{\{\dim\opH^n(G,V(m)):m\in\mathbb{N} \}}\right)$ grows exponentially.
\end{corollary}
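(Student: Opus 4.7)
The approach is to directly combine two key results established earlier in the paper. First, by Theorem~\ref{cohomology in V(m)}, we have the identification
\[ \dim\opH^n(G,V(m)) = N\!\left(\tfrac{m}{2}+1,\, n+1\right) \]
for all $m \ge 0$ and $n \ge 0$, which reduces the cohomological question to the combinatorial counting problem studied in Section~\ref{counting}. Note that $V(m)$ need only be considered for $m$ in the root lattice $2\mathbb{Z}$, so the argument $\tfrac{m}{2}+1$ is a non-negative integer as required.

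Next, I would apply Proposition~\ref{exponential bound for V(m)}, specialized to the trivial weight sequence $\mathfrak{d}=(1,1,\ldots,1)$, in which case $N_r^{\mathfrak{d}}(m',n') = N_r(m',n') = N(m',n')$ once $r$ is taken large enough (specifically, $r > \log_p m'$, where $N_r$ stabilizes as noted just before Lemma~\ref{properties of N(m,n,p)}). The proposition then gives the uniform bound $N(m',n') \le C_{n'}$ valid for every $m'\in\mathbb{N}$. Setting $m' = \tfrac{m}{2}+1$ and $n' = n+1$ yields the desired inequality $\dim\opH^n(G,V(m)) \le C_{n+1}$.

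For the second assertion on exponential growth, observe that the upper bound $C_{n+1} = (n+1)\,4^{n+1}\,e^{2\pi(n+1)/\sqrt{3}}$ is manifestly of exponential order in $n$, which supplies an exponential upper bound on the sequence $\left(\max\{\dim\opH^n(G,V(m)) : m\in\mathbb{N}\}\right)$. A matching exponential lower bound was established by Erdmann, Hannabuss, and Parker in \cite[Corollary 5.6.3]{EHP:2013}. Combining both bounds gives ``grows exponentially'' in the precise sense defined at the end of Section~\ref{notation}.

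Since the heavy lifting has already been carried out in Section~\ref{counting} (via the partition-theoretic estimates culminating in Theorem~\ref{partition-sup} and Proposition~\ref{exponential bound for V(m)}) and in the proof of Theorem~\ref{cohomology in V(m)} (via Serre duality and the $B$-cohomology computation of Lemma~\ref{B_rT-cohomology}), there is no substantive obstacle in the corollary itself. The only point requiring care is the index shift between the cohomological degree $n$ and the counting degree $n+1$, so that one correctly invokes $C_{n+1}$ rather than $C_n$; this also explains why the upper bound is not tight when compared with the Fibonacci-type bound of Proposition~\ref{Fibonacci} valid in the low-degree range.
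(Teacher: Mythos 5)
Your argument for the upper bound is identical to the paper's: combine the identification $\dim\opH^n(G,V(m)) = N\bigl(\tfrac{m}{2}+1,n+1\bigr)$ from Theorem~\ref{cohomology in V(m)} with the uniform estimate $\sup_m N(m,n')\le C_{n'}$ from Proposition~\ref{exponential bound for V(m)}. That part is correct and matches the paper exactly.

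Where you diverge is in establishing the exponential \emph{lower} bound needed for ``grows exponentially.'' You invoke \cite[Corollary 5.6.3]{EHP:2013}, which is a valid citation and does logically close the argument. However, the paper deliberately does \emph{not} do this: one of its stated contributions (see the discussion immediately preceding the corollary) is to give a ``much shorter argument'' than the generating-function computations of Erdmann--Hannabuss--Parker. The paper's proof supplies a self-contained lower bound entirely within its own framework: it observes that for $3\mid(n+2)$, taking $t=\tfrac{n+2}{3}-1$ and $m=2p+2p^2+\cdots+2p^{2t+1}-2$ gives, via Theorem~\ref{cohomology in V(m)} and the binomial formula of Lemma~\ref{key_lemma}, the estimate $\dim\opH^n(G,V(m))\ge\binom{2t}{t}\ge 2^{(n+2)/3-1}$, and then shows that $\bigl(\max_m N(m,n+1)\bigr)_n$ is nondecreasing by constructing an injection $S_r(m_0,n)\to S_{r+1}(m_0+p^{r+1},n+1)$, thereby covering all $n$. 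So your proposal is correct, but you miss the chance to demonstrate that the machinery of Section~\ref{counting} already yields the lower bound without appeal to external results; citing EHP here somewhat undercuts the paper's point that its approach strengthens and simplifies EHP's result.
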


\begin{proof} 
The first statement immediately follows from Theorem \ref{cohomology in V(m)} and Proposition \ref{exponential bound for V(m)}. For the remainder, we prove the sequence $\left(\max\{N(m,n+1): m\in\mathbb{Z}^+\}\right)$ grows exponentially. We first compute an exponential lower bound. Suppose $3\mid(n+2)$, set $t=\frac{n+2}{3}-1$, and consider
\[ m=2p+2p^2+\cdots+2p^{2t+1}-2. \]
It then follows from Theorem \ref{cohomology in V(m)} that  
\[ \dim\opH^n(G,V(m))\ge\binom{2t}{t}\ge 2^t=2^{\frac{n+2}{3}-1}. \]
Finally the argument will be completed if we show that the sequence \[ \left(\max{\{N(m,n+1):m\in\mathbb{N} \}}\right)\]
is not decreasing. Indeed, for each $n\ge 1$, choose $m_0$ and $r$ such that  \[N(m_0,n)=N_r(m_0,n)=\max\{N(m,n):m\in\mathbb{N}\};\]
for example, we may take $r>\log_pm_0$. The map $S_r(m_0,n)\rightarrow S_{r+1}(m_0+p^{r+1},n+1)$, defined by
\[((a_1,\ldots,a_r),(b_1,\ldots,b_r))\mapsto ((a_1,\ldots,a_r,0),(b_1,\ldots,b_r,1)),\]
is clearly well-defined and injective.
It follows that 
\[N_r(m_0,n)\le N_{r+1}(m_0+p^{r+1},n+1)\le N(m_0+p^{r+1},n+1)\le\max\{N(m,n+1):m\in\mathbb{N}\},\]
hence
\[\max\{N(m,n):m\in\mathbb{N}\}\le\max\{N(m,n+1):m\in\mathbb{N}\}.\]
This finishes the proof.
\end{proof}

\begin{remark}\label{p=2}
Our method also works for the case when $p=2$. In such case, using results in \cite[Section 4.4]{Ngo:2013} and similar arguments as in Lemma \ref{B_rT-cohomology} and Theorem \ref{cohomology in V(m)}, we can compute the dimension of $\opH^n(G,V(m))$. Notably, as the $U_r$-cohomology does not depend on $\Lambda^\bullet(\frak u^*)$, the exterior algebra over $\fraku^*$, when $p=2$, the dimension of $\opH^n(G,V(m))$ is then given by the number of solutions $(a_1,\ldots,a_r)\in\mathbb{N}^r$ of a much simpler system
\begin{align*}
\begin{cases}
a_1+\cdots+a_r=n+1,\\
2a_1+\cdots+2^ra_r=m+1,
\end{cases}
\end{align*}
for sufficiently large $r$, which we denote by $N'(m+1,n+1)$. Observe that $$N'(m+1,n+1)=p_{A,n+1}(m+1)$$ with $A=\{2, 2^2,2^3,\ldots\}$, described in Subsection \ref{partitions}. So $N'(m+1,n+1)$ coincides with the formula in \cite[Corollary 3.2.2]{EHP:2013}.

In principle, the ideas in Section \ref{counting} can be applied to investigate properties of $N'(m,n)$. More precisely, to employ the $p$-adic expansion, we need $n$ to be less than $p$, so that $n$ is either $0$ and $1$. Then our method produces the same result for the space $\Ext^1$ as computed in \cite{E:1995}. However, for arbitrary $n$, our study on partition functions in Subsection \ref{partitions} gives a nice upper bound for $N'(m,n)$, which is much smaller than the bound $C_n$ for $N(m,n)$. In particular, applying Theorem \ref{partition-sup} we have  
\end{remark}
\begin{theorem}\label{bound for p=2}
Suppose $p=2$. For all $n\ge 1$ and $m\ge 0$,
\[ \dim\opH^n(G,V(m))\le e^{2\pi(n+1)/\sqrt{3}}2^{n+1}=\frac{C_{n+1}}{(n+1)2^{n+1}}. \]
\end{theorem}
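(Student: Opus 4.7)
The plan is to reduce this bound to a direct application of Theorem \ref{partition-sup}, via the identification of $\dim\opH^n(G,V(m))$ with a partition function that is explained in Remark \ref{p=2}.

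First, I would invoke the formula from Remark \ref{p=2}: when $p=2$, the dimension $\dim\opH^n(G,V(m))$ equals the number $N'(m+1,n+1)$ of solutions $(a_1,\ldots,a_r)\in\mathbb{N}^r$ (for sufficiently large $r$) to the system
\[a_1+\cdots+a_r=n+1,\qquad 2a_1+2^2 a_2+\cdots+2^r a_r=m+1.\]
Each such solution encodes a partition of $m+1$ into $n+1$ parts, where the part $2^i$ is used $a_i$ times. Consequently $N'(m+1,n+1)=p_{A_0,n+1}(m+1)$, where $A_0=\{2,2^2,2^3,\ldots\}$.

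Next I would compare $A_0$ to the set $A=\{2^i : i=0,1,2,\ldots\}$ from Subsection \ref{partitions} in the case $p=2$. Since $A_0\subset A$, every partition counted by $p_{A_0,n+1}(m+1)$ is also counted by $p_{A,n+1}(m+1)$, so
\[p_{A_0,n+1}(m+1)\le p_{A,n+1}(m+1)\le \sup_{m'\in\mathbb{Z}^+} p_{A,n+1}(m').\]
Applying Theorem \ref{partition-sup} (with parameter $n+1$) gives the upper bound $e^{2\pi(n+1)/\sqrt{3}}\cdot 2^{n+1}$, yielding the first equality in the theorem.

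Finally, for the second equality, I would verify by direct computation that
\[\frac{C_{n+1}}{(n+1)2^{n+1}}=\frac{(n+1)\,4^{n+1}\,e^{2\pi(n+1)/\sqrt{3}}}{(n+1)\,2^{n+1}}=2^{n+1}\,e^{2\pi(n+1)/\sqrt{3}},\]
using the definition $C_{n+1}=(n+1)4^{n+1}e^{2\pi(n+1)/\sqrt{3}}$ from Remark \ref{modification of (b_i)}. There is no real obstacle here: the entire proof is essentially an identification of $\dim\opH^n(G,V(m))$ with a specific partition function, followed by a citation of Theorem \ref{partition-sup}. The only subtlety is noting that the set $A_0$ of allowed part-sizes omits $1$, but since it is contained in $A$, monotonicity of $p_{A,n}(m)$ in $A$ removes this issue immediately.
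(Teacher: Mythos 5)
Your proposal is correct and follows essentially the same route as the paper: identify $\dim\opH^n(G,V(m))$ with $N'(m+1,n+1)$ via Remark~\ref{p=2}, recognize this as a partition function with parts from powers of $2$, and cite Theorem~\ref{partition-sup}. The one point you handle more carefully than the paper does is the mismatch between the set $A_0=\{2,4,8,\ldots\}$ appearing in Remark~\ref{p=2} and the set $A=\{1,2,4,8,\ldots\}$ for which Theorem~\ref{partition-sup} was actually proved; your observation that $A_0\subset A$ gives $p_{A_0,n+1}(m+1)\le p_{A,n+1}(m+1)$ is exactly the monotonicity needed to close that small gap, which the paper leaves implicit.
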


A further advantage of our method is the feasibility of explicitly computing low degree cohomology, which is a challenging task in cohomology theory even in the $SL_2$-case. Note that, from the binomial formula in Theorem \ref{cohomology in V(m)}, we could compute the dimension of $\opH^n(G,V(m))$ for all $n\le 2p-3$. However, as calculations will involve a large number of cases for $n>3$, we only list below the detailed descriptions for $\dim\opH^n(G,V(m))$ with $n\le 3$. From now on, we assume again that $p$ is an odd prime.

\begin{proposition}\label{low degree cohomology of V(m)}
For all $m\ge 0$, we have
\begin{enumalph}
\item 
 $~~\dim\opH^1(G,V(m))= \dim\opH^2(B,-m-2)\\\text{~~~~~~~~~~~~~~~}
~\quad\quad\quad\quad\quad\quad\quad= 
\begin{cases}
1~~~ &\text{if}~~m\in\{2p^u-2\epsilon, 2p^u+2p^v-2\},\\
0~~~&\text{otherwise},
\end{cases}
$
\item
$~~\dim\opH^2(G,V(m))=\dim\opH^3(B,-m-2)\\\text{~~~~~~~~~~~~~~~}
~\quad\quad\quad\quad\quad\quad\quad
=\begin{cases}
2~~~ &\text{if}~~m=2p^u+2p^v-2,\\
1~~~ &\text{if}~~m\in\{2p^u, 4p^u-2, 2p^u+2p^v, 2p^u+2p^v+2p^w-2\},\\
0~~~&\text{otherwise},
\end{cases}
$
\item
$~~\dim\opH^3(G,V(m))=\dim\opH^4(B,-m-2)\\\text{~~~~~~~~~~~~~~~}
~\quad\quad\quad\quad\quad\quad\quad
=\begin{cases}
3~~~ &\text{if}~~m=2p^u+2p^v+2p^w-2,\\
2~~~ &\text{if}~~m=2p^u+2p^v,\\
1~~~ &\text{if}~~m\in\{4p^u-2\epsilon, 2p^u+2p^v-2, 4p^u+2p^v-2, \\
& 2p^u+2p^v+2p^w, 2p^u+2p^v+2p^w+2p^x-2\},\\
0~~~&\text{otherwise},
\end{cases}
$
\end{enumalph}
where the exponents u, v, w, and x are distinct positive integers and $\epsilon$ is either 0 or 1.
\end{proposition}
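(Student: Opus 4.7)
The plan is to combine Theorem \ref{cohomology in V(m)} with Lemma \ref{key_lemma} to reduce each part to a finite combinatorial enumeration. Setting $M := m/2 + 1$, Theorem \ref{cohomology in V(m)} gives $\dim H^n(G, V(m)) = N(M, n+1)$, and the isomorphism $H^n(G, V(m)) \cong H^{n+1}(B, -m-2)$ asserted in each part is Theorem \ref{cohomology in V(m)} combined with Lemma \ref{B_rT-cohomology}. Since $p$ is odd, $2p - 2 \ge 4 \ge n + 1$ for every $n \le 3$, so Lemma \ref{key_lemma} yields the closed formula
\[
N(M, n+1) = \binom{r_M - s_M}{2\,\mathrm{ht}(M) - c_0 - (n+1)},
\]
where $c_0 + c_1 p + \cdots + c_{r_M} p^{r_M}$ is the $p$-adic expansion of $M$.

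By Lemma \ref{properties of N(m,n,p)}(a), this binomial vanishes unless $c_0 \in \{0, 1\}$, and if $c_0 = 1$ then Lemma \ref{properties of N(m,n,p)}(b) reduces $N(M, n+1)$ to $N(M - 1, n)$, with $M - 1$ now divisible by $p$. I would handle both cases uniformly by enumerating positive integers $M'$ with $p \mid M'$ whose $p$-adic shape makes the binomial coefficient $\binom{r_{M'} - s_{M'}}{2\,\mathrm{ht}(M') - (n'+1)}$ nonzero, where $n'$ is $n$ in the first case and $n - 1$ in the second. Nonvanishing forces $\lceil (n'+1)/2 \rceil \le \mathrm{ht}(M') \le n' + 1$ together with $r_{M'} - s_{M'} \ge 2\,\mathrm{ht}(M') - (n'+1)$; for each admissible pair $(\mathrm{ht}(M'), r_{M'} - s_{M'})$ I list the possible shapes of $M'$ as a sum of $p$-powers $p^{s_i}$ with distinct exponents $s_i \ge 1$, allowing one digit equal to $2, 3$, or $4$ where permitted. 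Shapes like $3 p^u$ or $2 p^u + 2 p^v$ typically fail the inequality $r_{M'} - s_{M'} \ge 2\,\mathrm{ht}(M') - (n'+1)$ and contribute nothing.

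Carrying this out, for $n = 1$ the only surviving shapes are $M' = p^u$ and $M' = p^u + p^v$ (distinct $u, v \ge 1$); for $n = 2$ one additionally gets $M' = 2 p^u$ and $M' = p^u + p^v + p^w$; for $n = 3$ one further gets $M' = 2 p^u + p^v$ and $M' = p^u + p^v + p^w + p^x$. Translating $M' = M$ back via $m = 2M - 2$ when $c_0(M) = 0$, and $M' = M - 1$ back via $m = 2M'$ when $c_0(M) = 1$, reproduces the families in (a)--(c), with the binomial values recording the dimensions $1, 2$, or $3$ as stated. The main obstacle is the bookkeeping in part (c): one must combine roughly six admissible shapes with the two-way split on $c_0(M) \in \{0, 1\}$ and verify no double-counting, but the enumeration is purely mechanical once the constraints on $\mathrm{ht}(M')$ and $r_{M'} - s_{M'}$ have been assembled.
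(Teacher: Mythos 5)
Your proposal is correct and follows essentially the same route as the paper: invoke Theorem \ref{cohomology in V(m)} (which itself rests on Lemma \ref{key_lemma}) to express $\dim\opH^n(G,V(m))$ as the binomial coefficient $\binom{r_M-s_M}{2\mathrm{ht}(M)-c_0-n-1}$ with $M=m/2+1$, extract bounds on $\mathrm{ht}$ from nonvanishing of that binomial, and then enumerate the admissible $p$-adic shapes of $M$ for $n=1,2,3$. The only cosmetic difference is that you peel off the $c_0=1$ case via Lemma \ref{properties of N(m,n,p)}(b) before enumerating, whereas the paper leaves $c_0$ in the binomial formula and treats both residues at once; the resulting bookkeeping and the final lists of $m$ agree.
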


\begin{proof}
Theorem \ref{cohomology in V(m)} implies that for $n=1,2,3$,
\[ \dim\opH^n(G,V(m))= \binom{r_{\frac{m}{2}+1}-s_{\frac{m}{2}+1}}{2\mathrm{ht}(\frac{m}{2}+1)-c_0-n-1}.\]
Observe that in order for $\dim\opH^n(G,V(m))$ to be nonzero, we need
\[ c_0+n+1\le 2\mathrm{ht}\left(\frac{m}{2}+1\right)\le r_{\frac{m}{2}+1}-s_{\frac{m}{2}+1}+c_0+n+1. \]
Note also that $\mathrm{ht}(\frac{m}{2}+1)\ge r_{\frac{m}{2}+1}-s_{\frac{m}{2}+1}+c_0$. So the above inequalities can be deduced to
\[ \frac{c_0+n+1}{2}\le \mathrm{ht}\left(\frac{m}{2}+1\right)\le n+1. \]
It is now easy to work out all the values of $\mathrm{ht}(\frac{m}{2}+1)$ for each $n=1,2,3$ so that we obtain the desired formulas of $m$ as in the theorem.
\end{proof}

\begin{remark}
Our results for the second degree and third degree $B$-cohomology in the preceding proposition agree with the calculations of Bendel, Nakano, and Pillen in \cite{BNP1}\cite{BNP2} when specialized to $G=SL_2$. The computation for $\opH^4(B,-m-2)$ is new. Next we give an upper bound for the dimension of $\opH^n(G,V(m))$ for degree $n$ up to $2p-3$.
\end{remark}

\begin{proposition}\label{Fibonacci for V(m)}
For all $n\le 2p-3$ and $m\ge 0$, we have $\dim\opH^n(G,V(m))\le F(n+1)$, where $F(n+1)$ is the $(n+1)$-th Fibonacci number.
\end{proposition}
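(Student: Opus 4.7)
The proof plan is a direct combination of two earlier results. By Theorem \ref{cohomology in V(m)}, we have the identification
\[
\dim \opH^n(G, V(m)) = N\!\left(\tfrac{m}{2}+1,\, n+1\right),
\]
which reduces the question to bounding the counting function $N$. The hypothesis $n \le 2p-3$ is equivalent to $n+1 \le 2p-2$, which is precisely the range in which Proposition \ref{Fibonacci} establishes the Fibonacci bound $N(m',n') \le F(n')$. Applying Proposition \ref{Fibonacci} with $n' = n+1$ and $m' = \frac{m}{2}+1$ then yields
\[
N\!\left(\tfrac{m}{2}+1,\, n+1\right) \le F(n+1),
\]
which is the desired inequality.

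The only minor technicality is that the formula in Theorem \ref{cohomology in V(m)} presupposes that $\frac{m}{2}+1$ is an integer, i.e. that $m$ is even. When $m$ is odd, $m$ lies outside the root lattice $2\mathbb{Z}$, and as noted at the start of Section \ref{cohomology} one has $\opH^n(G,V(m))=0$, so the bound holds trivially. Thus the argument needs no further case analysis.

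There is no real obstacle: the work has been absorbed into the preparation of Theorem \ref{cohomology in V(m)} and Proposition \ref{Fibonacci}. The proof itself will be only a few lines, stating the identification, checking the range hypothesis $n+1 \le 2p-2$, and quoting the Fibonacci bound. I would not attempt to sharpen or extend the statement here, since removing the restriction $n \le 2p-3$ would amount to the harder open question raised in Remark \ref{Fibonacci conjecture}.
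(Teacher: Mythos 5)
Your proof is correct and is essentially identical to the paper's one-line proof, which simply cites Proposition \ref{Fibonacci} and Theorem \ref{cohomology in V(m)}. You even supply the small extra check (odd $m$ lies outside the root lattice, so the cohomology vanishes) that the paper leaves implicit.
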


\begin{proof}
This follows from Proposition \ref{Fibonacci} and Theorem \ref{cohomology in V(m)}.
\end{proof}

\section{Extensions of Weyl modules}\label{extension}
In this section we combine our calculations in previous sections and the recursive formulas in \cite{Pa:2006} to compute the dimension of $\Ext^3_G(V(m_2),V(m_1))$. We also obtain upper bounds for the dimension of higher extensions of Weyl modules. Again, $p$ is assumed to be an odd prime.

Throughout the section, we let $m_1, m_2$ be non-negative integers where $m_1=pa+i$ and $m_2=pb+j$ with $a, b, i, j\in\mathbb{N}$ and $0\le i, j\le p-1$. Without loss of generality, we assume that $a\ge b$. Using blocks of $SL_2$, we can determine the values of $m_1, m_2$ such that the extension space between $V(m_1)$ and $V(m_2)$ might be nonzero. More explicitly, if $i=p-1$ (resp. $j=p-1$), then $m_1$ and $m_2$ are in the same block only if $j=p-1$ (resp. $i=p-1$). Moreover, we have for each $n\ge 0$
\[ \Ext^n_G(V(pb+p-1),V(pa+p-1))\cong\Ext^n_G(V(b),V(a)).\]
For this reason, we assume from now on that $0\le i, j\le p-2$. Then there are only two cases when $\Ext^n_G(V(m_2),V(m_1))$ could be nonzero:
\begin{itemize}
\item $a-b$ is even and $i=j$,
\item $a-b$ is odd and $j=p-2-i$.
\end{itemize}
We refer the reader to \cite[page 388]{Pa:2006} for further details.

\subsection{Low degree extensions} Recall that $\Ext^1$ and $\Ext^2$ spaces were described by Erdmann et. al. in \cite{E:1995} and \cite{CE}. It follows from their results that
\[ \dim\Ext_G^n(V(m_2),V(m_1))\le n \] 
for $n=1,2$. We now extend this result to $n=3$ and then give an upper bound for the dimension of $\Ext^n$ for $n\le 2p-3$. 

We begin with an explicit description for $\Ext^3$. 

\begin{theorem}\label{Ext^3}
If $a-b$ is even, then
\begin{align}\label{Ext^3(1)}
 \dim\Ext_G^3(V(pb+i),V(pa+i))=
\begin{cases}
1~~&\text{if}~~a-b=4,\\
N\left(\frac{a-b}{2},3\right) &\text{otherwise}.\\
\end{cases}
\end{align}
Otherwise, if $a-b$ is odd, then set $j=p-2-i$ and we have
\begin{align*}
\dim\Ext_G^3(V(pb+j),V(pa+i))=
\begin{cases}
1~~&\text{if}~~a-b=2p^u+3-2\epsilon,\\
2~~&\text{if}~~a-b=2p^u+2p^v+1,\\
&~\text{~~~~with}~u=1~\text{or}~v=1, \\
3~~&\text{if}~~a-b=2p^u+2p^v+1, \\
&~~~\text{~~~with}~u, v>1,\\
\dim\Ext_G^3(V(b),V(a-1)) &\text{otherwise},\\
\end{cases}
\end{align*}
where $u, v$ are distinct positive integers and $\epsilon$ is either $0$ or $1$. 
\end{theorem}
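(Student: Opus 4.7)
The plan is to invoke Parker's recursive formula \cite[Theorem 5.1]{Pa:2006}, which expresses $\Ext^n_G(V(m_2),V(m_1))$ in terms of a short list of lower-degree $\Ext$-spaces and cohomology spaces $\opH^k(G,V(\ell))$ with $k<n$ and with $\ell$ obtained by ``peeling off'' a $p$-digit from $m_1$ and $m_2$. Combined with the explicit formulas for $\dim\opH^k(G,V(\ell))$ with $k\le 3$ given in Proposition~\ref{low degree cohomology of V(m)}, this reduces the theorem to a careful case analysis depending on the parity of $a-b$ and the relation between $i$ and $j$ dictated by the block structure.

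For the even case $a-b$ even with $i=j$, Parker's recursion reduces $\Ext^3_G(V(pb+i),V(pa+i))$ essentially to $\opH^2(G,V(a-b-2))$ together with a controlled contribution from lower-degree terms. By Theorem~\ref{cohomology in V(m)} we have $\dim\opH^2(G,V(a-b-2))=N(\tfrac{a-b}{2},3)$, which matches the main clause of \eqref{Ext^3(1)}. The exceptional case $a-b=4$ must be treated separately because $N(2,3)=0$, yet an additional term in the recursion, visible only at this small-parameter boundary, contributes $1$; this is verified by direct inspection of the base step of the recursion. For the odd case $a-b$ odd with $j=p-2-i$, Parker's recursion relates $\Ext^3_G(V(pb+j),V(pa+i))$ to $\Ext^3_G(V(b),V(a-1))$ plus correction terms computable from low-degree cohomology via Proposition~\ref{low degree cohomology of V(m)}. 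The special forms $a-b=2p^u+3-2\epsilon$ and $a-b=2p^u+2p^v+1$ are precisely those for which these correction terms are nonzero; the dichotomy between ``$u=1$ or $v=1$'' and ``$u,v>1$'' reflects a collision in the indexing of the correction terms when one exponent equals $1$, which reduces the total contribution by one in that case. The ``otherwise'' clause is then simply the bare inductive step of the recursion with all corrections vanishing.

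The main technical obstacle will be the bookkeeping of Parker's recursion: for each of the two parity cases one must identify precisely which summands on the right-hand side are nonzero, match them against the explicit values in Proposition~\ref{low degree cohomology of V(m)}, and carefully handle the small-parameter boundaries ($a-b=4$ in the even case, and $u=1$ or $v=1$ in the odd case) where the recursion produces extra or coalescing contributions that deviate from the generic pattern. Accurately tracking these edge cases is what separates the straightforward recursive reduction from the precise tabulation that appears in the statement.
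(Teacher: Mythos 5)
Your skeleton is the paper's: apply Parker's recursion \cite[Theorem 5.1]{Pa:2006} and feed in Proposition \ref{low degree cohomology of V(m)} together with Theorem \ref{cohomology in V(m)}, splitting on the parity of $a-b$; your treatment of the even case, including the exceptional value at $a-b=4$ coming from the extra summand $\opH^0(G,V(a-b-4))$ while $N(2,3)=0$, matches the paper.

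The gap is in the odd case. For $n=3$ the recursion gives exactly one correction term, $\Ext_G^3(V(pb+j),V(pa+i))\cong \opH^1(G,V(a-b-3))\oplus\Ext_G^3(V(b),V(a-1))$, and $\opH^1(G,V(a-b-3))$ has dimension exactly $1$ in both special families $a-b=2p^u+3-2\epsilon$ and $a-b=2p^u+2p^v+1$; so there is no ``collision in the indexing of the correction terms,'' and your proposed mechanism for the $2$-versus-$3$ dichotomy would fail. The dichotomy comes entirely from the recursive summand $\Ext_G^3(V(b),V(a-1))$, which in the family $a-b=2p^u+2p^v+1$ must itself be evaluated through the even-case formula \eqref{Ext^3(1)}: it equals $N(p^{u-1}+p^{v-1},3)$, which is $1$ when $u=1$ or $v=1$ (the summand $p^0=1$ lands in the digit $c_0$, and Lemmas \ref{properties of N(m,n,p)}(b) and \ref{key_lemma} give $N(1+p^{v-1},3)=N(p^{v-1},2)=1$) and $2$ when $u,v>1$ (the binomial $\binom{2}{1}$). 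Similarly, in the family $a-b=2p^u+3-2\epsilon$ the value $1$ does not follow from the correction term alone: you must prove $\Ext_G^3(V(b),V(a-1))=0$, which requires the block argument when $\epsilon=0$ (then $a-1$ and $b$ lie in different blocks) and the computation $N(p^{u-1},3)=0$ when $\epsilon=1$. Without these evaluations of the recursive summand, the explicit values $1,2,3$ in the statement are not reachable from your plan.
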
 

\begin{proof}
Suppose $a-b$ is even. From Theorem 5.1 in \cite{Pa:2006}, we have
\[
\Ext_G^3(V(pb+i),V(pa+i))\cong \opH^2(G,V(a-b-2))\oplus\opH^0(G,V(a-b-4)).
\]
Note that $\dim\opH^0(G,V(a-b-4))=1$ if and only if $a-b=4$. Then $\opH^2(G,V(a-b-2))=0$, which proves the first equality. When $a-b\ne 4$, that is $\dim\opH^0(G,V(a-b-4))=0$, we obtain
\[ \dim\Ext_G^3(V(pb+i),V(pa+i))=\dim\opH^2(G,V(a-b-2))=N\left(\frac{a-b}{2},3\right). \]
 
Now suppose $a-b$ is odd. Again, using \cite[Theorem 5.1]{Pa:2006} we have
\begin{align}\label{Ext^3(3)}
\Ext_G^3(V(pb+j),V(pa+i))\cong \opH^1(G,V(a-b-3))\oplus\Ext_G^3(V(b),V(a-1)).
\end{align}
Applying Proposition \ref{low degree cohomology of V(m)}(a) to $\opH^1(G,V(a-b-3))$, we get the following three cases:

{\bf Case 1:} $a-b-3=2p^u-2\epsilon$ with $u$ a positive integer and $\epsilon$ either $0$ or $1$. Then $\dim\opH^1(G,V(a-b-3))=1$. Suppose $b=pb'+i'$ with some non-negative integers $b'$ and $i'\le p-2$, then \[ a-1=p(2p^{u-1}+b')+i'+2-2\epsilon.\]
Now if $\epsilon=0$, then $a-1$ and $b$ are not in the same block; hence $\Ext_G^3(V(b),V(a-1))\cong 0$. Otherwise, $\epsilon=1$, using \eqref{Ext^3(1)} we have 
\[ \dim\Ext_G^3(V(b),V(a-1))=N\left(p^{u-1},3\right)=0.\]
Therefore, \eqref{Ext^3(3)} implies that
\[ \dim\Ext_G^3(V(pb+j),V(pa+i))=\dim\opH^1(G,V(a-b-3))=1.\]  

{\bf Case 2:} $a-b-3=2p^u+2p^v-2$ with $u, v$ are distinct positive integers. Then $\dim\opH^1(G,V(a-b-3))=1$. It follows that $a-1=2p^u+2p^v+b$. A similar argument as in the previous case together with \eqref{Ext^3(1)} give us
\begin{align*}
\dim\Ext_G^3(V(b),V(a-1))&= N(p^{u-1}+p^{v-1},3)\\
&= 
\begin{cases}
1~~&\text{if}~~u=1~\text{or}~v=1,\\
2~~&\text{else}.
\end{cases}
\end{align*}
So we have from \eqref{Ext^3(3)} that
\begin{align*}
\dim\Ext_G^3(V(pb+j),V(pa+i))=
\begin{cases}
2~~&\text{if}~~a-b=2p^u+2p^v+1,\\
&~\text{~~~~with}~u=1~\text{or}~v=1, \\
3~~&\text{if}~~a-b=2p^u+2p^v+1, \\
&~~~\text{~~~with}~u, v>1.
\end{cases}
\end{align*}

{\bf Case 3:} If $a$ and $b$ are not in Case 1 or 2, then we know from Proposition \ref{low degree cohomology of V(m)}(a) that $\dim\opH^1(G,V(a-b-3))=0$; hence
\[
\Ext_G^3(V(pb+i),V(pa+i))\cong\Ext_G^3(V(b),V(a-1)),
\]
which finishes our proof.
\end{proof}

It follows immediately that

\begin{corollary}\label{Ext^3 bound}
For all $m_1, m_2\ge 0$, we have $\dim\Ext_G^3(V(m_2),V(m_1))\le 3$.
\end{corollary}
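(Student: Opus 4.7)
The plan is to deduce the bound directly from the explicit case analysis of Theorem \ref{Ext^3}, combined with strong induction on $\max(m_1,m_2)$ for the one recursive branch. After the block-theoretic reductions at the start of the section, I may assume $m_1=pa+i$ and $m_2=pb+j$ with $a\ge b$, $0\le i,j\le p-2$, and that either $a-b$ is even with $i=j$, or $a-b$ is odd with $j=p-2-i$; outside these possibilities the Ext group vanishes and the bound is trivial.

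In the even branch, Theorem \ref{Ext^3} yields either $1$ or $N(\tfrac{a-b}{2},3)$. Since $p$ is an odd prime, $3\le 2p-2$, so Proposition \ref{Fibonacci} applies and gives $N(m,3)\le F(3)=2$ for every $m$; hence the dimension is at most $2$ in this branch. In the odd branch, three of the four sub-cases are given explicitly by Theorem \ref{Ext^3} as $1$, $2$, or $3$, all bounded by $3$. The fourth sub-case is the recursive identity
\[\dim\Ext_G^3(V(m_2),V(m_1))=\dim\Ext_G^3(V(b),V(a-1)),\]
which I plan to handle by the induction hypothesis applied to the right-hand side.

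The main (modest) obstacle is verifying that this recursion strictly decreases the inductive measure. Since $a-b$ is odd and $a\ge b$, I have $a>b\ge 0$, so $a\ge 1$ and $m_1\ge p$; consequently $\max(a-1,b)\le a-1<pa\le m_1=\max(m_1,m_2)$, so the induction is well-founded. The base case $\max(m_1,m_2)<p$ forces $a=b=0$, which lands in the even branch where the bound $\le 2$ is already established. Collecting the three possibilities (vanishing, even branch bounded by $2$, odd branch bounded by $3$) yields the corollary.
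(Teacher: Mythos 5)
Your proof is correct and follows essentially the same route as the paper: read the bound off the case analysis in Theorem~\ref{Ext^3}, using $N(m,3)\le F(3)=2$ from Proposition~\ref{Fibonacci} in the even branch and handling the one recursive sub-case of the odd branch by induction on $\max(m_1,m_2)$. The paper simply asserts that the corollary ``follows immediately'' from the theorem; your write-up makes explicit the strictly-decreasing induction (together with the base case) that justifies this, which is a genuine and worthwhile clarification rather than a different approach.
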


Next, we extend the result in Proposition \ref{Fibonacci for V(m)} to extension spaces between Weyl modules. 

\begin{lemma}\label{Fibonacci-bound-ext-lemma}
Suppose $a\ge b$ are non-negative integers and $0\le i,j\le p-2$. Then for $1\le n\le 2p-3$,
\begin{align}\label{Fibonacci-bound-ext-lemma1}
\dim\Ext^n_G(V(pb+i),V(pa+i))\le F(n+1),
\end{align}
and
\begin{align}\label{Fibonacci-bound-ext-lemma2}
\dim\Ext^n_G(V(j),V(pa+i))\le F(n+1).
\end{align}
\end{lemma}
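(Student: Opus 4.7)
The plan is to proceed by induction on $n$, mirroring the approach used in the proof of Theorem \ref{Ext^3}. The base case $n = 1$ is Erdmann's classical result \cite{E:1995}, which directly yields $\dim\Ext^1_G(V(m_2),V(m_1)) \le 1 = F(2)$. For the inductive step, I would apply Parker's recursive formula \cite[Theorem 5.1]{Pa:2006}, which (as illustrated for $n=3$) decomposes each of these extension spaces into a direct sum of two summands: (i) a cohomology group of the form $\opH^k(G,V(\ell))$ for some $k < n$, bounded by $F(k+1)$ via Proposition \ref{Fibonacci for V(m)}; and (ii) either a further cohomology group or an extension $\Ext^n_G(V(b'),V(a'))$ between Weyl modules of strictly smaller highest weights, to which the induction hypothesis applies.

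For the first inequality, the parity obstruction forces $a-b$ to be even (otherwise the Ext vanishes), and iterated use of Parker's recursion unwinds $\Ext^n_G(V(pb+i),V(pa+i))$ into a direct sum of cohomology groups of Weyl modules with degrees $n-1,\,n-3,\,n-5,\,\ldots$. Bounding each summand by Proposition \ref{Fibonacci for V(m)} and applying the Fibonacci identity
\[
\sum_{k\ge 0} F(n-2k) \le F(n+1),
\]
(with equality when $n$ is odd), the bound $F(n+1)$ follows. The termination of the recursion is guaranteed because each step strictly decreases the relevant highest weight.

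For the second inequality, $b=0$, so $V(b)=k$ and the recursive extension summand in Parker's formula collapses to a cohomology group of a Weyl module, since $\Ext^n_G(V(0),V(a-1)) = \opH^n(G,V(a-1))$. When $a$ is even and $j=i$, the analysis reduces to the first inequality applied with $b=0$. The delicate case is $a$ odd and $j=p-2-i$, where one obtains a decomposition of the form
\[
\Ext^n_G(V(j),V(pa+i)) \cong \opH^{n-2}(G,V(a-3)) \oplus \opH^n(G,V(a-1)).
\]
The naive sum of Fibonacci bounds from Proposition \ref{Fibonacci for V(m)} gives $F(n-1)+F(n+1)$, which exceeds $F(n+1)$, so a sharper joint analysis is required.

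\emph{The main obstacle} is precisely this overlap analysis in the odd branch of the second inequality: the Fibonacci bound $F(n+1)$ can only be achieved by showing that the $p$-adic structure of $a$ cannot make both $\opH^{n-2}(G,V(a-3))$ and $\opH^n(G,V(a-1))$ attain their individual Fibonacci maxima simultaneously. This requires exploiting the explicit characterization of when $N(m,n)$ is non-zero (and near-maximal) provided by Lemma \ref{key_lemma}, Proposition \ref{maximum form}, and the low-degree analysis of Proposition \ref{low degree cohomology of V(m)}, noting that the $p$-adic shapes of $a-3$ and $a-1$ required to make the two cohomology groups simultaneously large are essentially incompatible. Combined with the Fibonacci recurrence $F(n+1)=F(n)+F(n-1)$, this yields the bound \eqref{Fibonacci-bound-ext-lemma2}.
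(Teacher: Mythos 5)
Your outline for inequality \eqref{Fibonacci-bound-ext-lemma1} matches the paper: unwind Parker's recursion (since $a-b$ is even) into the sum $\sum_{t\ge 0}\dim\opH^{n-1-2t}(G,V(a-b-2-2t))$, bound each summand by $F(n-2t)$ via Proposition~\ref{Fibonacci for V(m)}, and apply the Fibonacci identity $\sum_{k\ge 0}F(n-2k)\le F(n+1)$. That part is complete.

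For \eqref{Fibonacci-bound-ext-lemma2} you have correctly identified the hard case ($a$ odd, $j=p-2-i$) and correctly observed that the naive estimate $F(n-1)+F(n+1)$ overshoots, but you leave the resolution as a gesture (``the $p$-adic shapes\ldots are essentially incompatible'') rather than an argument, and you point at the wrong tools. The tool you need is not Lemma~\ref{key_lemma} or Proposition~\ref{maximum form} (a maximality analysis), but the much simpler vanishing statement in Lemma~\ref{properties of N(m,n,p)}(a): $N(m,n)=0$ unless $m\equiv 0$ or $1\pmod p$, together with the shift $N(m,n)=N(m+1,n+1)$ for $p\mid m$ from part (b). Here is how the argument actually closes. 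Unwinding Parker's recursion fully and applying Theorem~\ref{cohomology in V(m)} gives
\begin{align*}
\dim\Ext^n_G(V(j),V(pa+i))=\sum_{t=0}^{\lfloor n/2\rfloor}N\Bigl(\tfrac{a+1}{2}-t,\,n-2t+1\Bigr).
\end{align*}
Because $n\le 2p-3$, the index $t$ runs over a window of length at most $p-1$, so by Lemma~\ref{properties of N(m,n,p)}(a) at most two summands survive: one $t_0$ with $\tfrac{a+1}{2}-t_0\equiv 0\pmod p$ and one $t_1$ with $\tfrac{a+1}{2}-t_1\equiv 1\pmod p$, and these satisfy $t_1=t_0-1$. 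If $t_0=0$ then $t_1=-1$ is out of range, only one term survives, and it is $\le F(n+1)$ by Proposition~\ref{Fibonacci}. If $t_0\ge 1$, then $t_1=t_0-1\ge 0$ and, using Lemma~\ref{properties of N(m,n,p)}(b) on the $t_1$-term, the two surviving terms are bounded by $F(n-2t_0+1)\le F(n-1)$ and $F(n-2t_1)=F(n-2t_0+2)\le F(n)$, summing to at most $F(n-1)+F(n)=F(n+1)$. Without this vanishing-and-shift mechanism your sketch does not actually eliminate the extra $F(n-1)$, and the joint ``near-maximality'' analysis you suggest would be both harder to carry out and unnecessary. Note also that the reduction to the case $a$ odd requires first observing (via the block decomposition, as at the start of Section~\ref{extension}) that when $a$ is even and $j=i$, \eqref{Fibonacci-bound-ext-lemma2} is a special case of \eqref{Fibonacci-bound-ext-lemma1} with $b=0$, which you should say explicitly rather than leave implicit.
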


\begin{proof}
We first prove the inequality \eqref{Fibonacci-bound-ext-lemma1}. As observed at the beginning of the section, we can assume $a-b$ is even. Then Theorem 5.1 in \cite{Pa:2006} gives us
\[  \dim\Ext^n_G(V(pb+i),V(pa+i))=\sum_{t=0}^{(a-b-2)/2}\dim\opH^{n-1-2t}(G,V(a-b-2-2t))\]
for $n\le\min\{2p-3,a-b-1\}$. Note that $\dim\Ext^n_G(V(pb+i),V(pa+i))\le 1$ for $n\ge a-b-1$. Now by Proposition \ref{Fibonacci for V(m)}, we have for $n\le 2p-3$
\[  \dim\Ext^n_G(V(pb+i),V(pa+i))=\sum_{t=0}^{\lfloor\frac{n}{2}\rfloor}F(n-2t)\le F(n+1),\]
which proves the inequality \eqref{Fibonacci-bound-ext-lemma1}.

To prove \eqref{Fibonacci-bound-ext-lemma2}, we only need to consider the case when $a$ is odd, $j=p-2-i$, and $0\le n\le a-1$. Then we have again from \cite[Theorem 5.1]{Pa:2006} and Lemma \ref{properties of N(m,n,p)}(a)(b),
\begin{align*}
 \dim\Ext^n_G(V(j),V(pa+i))&=\sum_{t=0}^{(a-1)/2}\dim\opH^{n-2t}(G,V(a-1-2t))\\
&=\sum_{t=0}^{\lfloor\frac{n}{2}\rfloor}N\left(\frac{a+1}{2}-t, n-2t+1\right)\\
&=\sum_{t=0}^{p-2}N\left(\frac{a+1}{2}-t, n-2t+1\right)\\
&=N\left(\frac{a+1}{2}-t_0, n-2t_0+1\right)+N\left(\frac{a+1}{2}-t_1, n-2t_1+1\right)\\
&=N\left(\frac{a+1}{2}-t_0, n-2t_0+1\right)+N\left(\frac{a+1}{2}-t_1-1, n-2t_1\right)
\end{align*}
where $0\le t_0, t_1\le p-2$, $\frac{a+1}{2}-t_0\equiv 0$ (mod $p$) and $\frac{a+1}{2}-t_1\equiv 1$ (mod $p$). If $t_0=0$ then there is no solution for $t_1$, so we obtain by Proposition \ref{Fibonacci} that
\[ \dim\Ext^n_G(V(j),V(pa+i))=N\left(\frac{a+1}{2}, n+1\right)\le F(n+1). \]
Otherwise, $t_0\ne 0$, and so
\[ \dim\Ext^n_G(V(j),V(pa+i))\le F(n-1)+F(n)=F(n+1).\]
\end{proof}
We are now ready to give an upper bound for the dimension of $\Ext^n$. 
\begin{theorem}\label{Fibonacci for Ext}
Suppose $m_1, m_2$ are non-negative integers and $m_2\in X^+_r$, i.e., $m_2< p^r$, for some $r\ge 1$. Then for all $n\le 2p-3$ 
\[ \dim\Ext^n_G(V(m_2), V(m_1))\le F(n+1)+(r-1)F(n). \]
\end{theorem}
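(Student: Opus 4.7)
The plan is to induct on $r$, using Parker's recursive formula \cite[Theorem 5.1]{Pa:2006} together with Lemma \ref{Fibonacci-bound-ext-lemma} and Proposition \ref{Fibonacci for V(m)}. Throughout, write $m_1 = pa + i$ and $m_2 = pb + j$ with $0 \le i, j \le p - 1$ and $b < p^{r-1}$. We may assume $m_1, m_2$ lie in the same block (otherwise the Ext vanishes), so that we fall into one of three configurations: $i = j = p - 1$, or $i = j \le p - 2$ with $a - b$ even, or $j = p - 2 - i$ with $a - b$ odd.

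For the base case $r = 1$, we have $m_2 < p$. If $j \le p - 2$, inequality \eqref{Fibonacci-bound-ext-lemma2} gives $\dim \Ext^n_G(V(m_2), V(m_1)) \le F(n+1)$ directly. If $j = p - 1$, the block equivalence $\Ext^n_G(V(p-1), V(pa + p - 1)) \cong \Ext^n_G(V(0), V(a)) = \opH^n(G, V(a))$ noted at the start of Section \ref{extension}, together with Proposition \ref{Fibonacci for V(m)}, again gives the bound $F(n+1)$. This matches $F(n+1) + (r-1) F(n)$ at $r = 1$.

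For the inductive step $r \ge 2$, we treat the three in-block configurations in turn. If $i = j = p - 1$, block equivalence gives $\Ext^n_G(V(m_2), V(m_1)) \cong \Ext^n_G(V(b), V(a))$ with $b < p^{r-1}$, and the induction hypothesis bounds this by $F(n+1) + (r-2)F(n)$, within the claim. If $i = j \le p - 2$ and $a - b$ is even, Parker's formula, as used in the proof of inequality \eqref{Fibonacci-bound-ext-lemma1}, decomposes the Ext space as $\bigoplus_{t \ge 0} \opH^{n-1-2t}(G, V(a-b-2-2t))$ (for $n$ in the relevant range; the remaining range is handled by the bound $\dim \Ext^n_G \le 1$ noted in Lemma \ref{Fibonacci-bound-ext-lemma}). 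Proposition \ref{Fibonacci for V(m)} and the Fibonacci identity $\sum_{t \ge 0} F(n-2t) \le F(n+1)$ then bound the total by $F(n+1)$. Finally, if $j = p - 2 - i$ and $a - b$ is odd, Parker's formula yields the decomposition
\[ \Ext^n_G(V(m_2), V(m_1)) \cong \bigoplus_{t \ge 0} \opH^{n-2-2t}(G, V(a-b-3-2t)) \;\oplus\; \Ext^n_G(V(b), V(a-1)); \]
Proposition \ref{Fibonacci for V(m)} and the identity $\sum_{t \ge 0} F(n-1-2t) \le F(n)$ bound the cohomology summand by $F(n)$, while the induction hypothesis (applied to $b < p^{r-1}$) bounds the remaining $\Ext$ by $F(n+1) + (r-2) F(n)$. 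Summing these gives $F(n) + F(n+1) + (r-2) F(n) = F(n+1) + (r-1) F(n)$, as claimed.

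The key obstacle is to pin down the exact shape of Parker's recursive decomposition in the two non-trivial in-block cases, and in particular to verify that the cohomology degrees in the odd case shift by $n-2-2t$ rather than $n-1-2t$ as in the even case. This shift is precisely what moves the cohomology contribution from the $F(n+1)$-bound into the $F(n)$-bound, and it is consistent both with the explicit $n = 3$ decomposition in Theorem \ref{Ext^3} and with the $b = 0$ specialization already computed in the proof of inequality \eqref{Fibonacci-bound-ext-lemma2}. Once the formulas are in hand, the two Fibonacci identities are routine inductive consequences of $F(k+1) = F(k) + F(k-1)$.
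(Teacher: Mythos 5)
Your proposal is correct and follows essentially the same route as the paper: induct on $r$, use Parker's recursive formula to split off an even-type contribution from $\Ext^n_G(V(b),V(a-1))$, bound the even summand by $F(n)$ via the odd/even Fibonacci telescoping identities, and invoke \eqref{Fibonacci-bound-ext-lemma1} and \eqref{Fibonacci-bound-ext-lemma2} for the remaining pieces. The only cosmetic differences are that the paper phrases the induction as going from $r$ to $r+1$ and cites \cite[Corollary 5.2]{Pa:2006} to get $\dim\Ext^{n}_G(V(m_2),V(m_1))=\dim\Ext^n_G(V(b),V(a-1))+\dim\Ext^{n-1}_G(V(pb+j),V(p(a-1)+j))$ as a black box (rather than unrolling the latter even-type Ext into a sum of $\opH$'s as you do), and the paper disposes of the $i=j=p-1$ configuration once and for all at the start of Section~\ref{extension} rather than inside the induction; your explicit treatment of that case is a bit more careful but amounts to the same thing.
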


\begin{proof}
By \eqref{Fibonacci-bound-ext-lemma2} in Lemma \ref{Fibonacci-bound-ext-lemma1}, the inequality of the theorem holds for $r=1$. Suppose now that it holds for some $r>1$. Let $m_2\in X^+_{r+1}$. Following from the notation at the beginning of this section, let
\begin{align*}
m_1=pa+i,\quad m_2=pb+j,
\end{align*}
where $a,b,i,j\in\mathbb{N}$ and $a\ge b$, $0\le i,j\le p-2$. From \eqref{Fibonacci-bound-ext-lemma1} in Lemma \ref{Fibonacci-bound-ext-lemma}, if $a-b$ is even and $i=j$, we obtain
\[ \dim\Ext^n_G(V(m_2), V(m_1))\le F(n+1). \]
We now consider the case when $j=p-2-i$ and $a-b$ is odd. It then follows from \cite[Corollary 5.2]{Pa:2006}, Lemma \ref{Fibonacci-bound-ext-lemma}, and inductive hypothesis that
\begin{align*}
\dim\Ext_G^n(V(m_2),V(m_1)) &=\dim\Ext^n_G(V(b),V(a-1))+\dim\Ext^{n-1}_G(V(pb+j),V(p(a-1)+j))\\
&\le F(n+1)+(r-1)F(n)+F(n)\\
&=F(n+1)+rF(n),
\end{align*}
which finishes our inductive argument.
\end{proof}

\subsection{Higher degree extensions} For $n> 2p-3$, we show that for each $r\ge 1$ the sequence $$\left(\max_{m_1\in X^+,m_2\in X_r^+}\{\dim\Ext^n_G(V(m_2),V(m_1))\}\right)$$ grows exponentially. It suffices to prove that the dimension of $\Ext^n_G(V(m_2),V(m_1))$ is bounded by an exponential function of $n$. As the proof is similar to that of Theorem \ref{Fibonacci for Ext}, we just outline the necessary modifications. Again, we write $m_1=pa+i$ and $m_2=pb+j$ with $a\ge b$ and consider two cases:

{\bf Case 1:} If $a-b$ is even and $i=j$ then by \cite[Theorem 5.1]{Pa:2006} and Proposition \ref{exponential bound for V(m)} we obtain an exponential bound for $\displaystyle{\max_{m_1\ge 0}\{\dim\Ext^n_G(V(m_2),V(m_1))\}}$. In particular, we have 
\begin{align*}
\dim\Ext^n_G(V(m_2),V(m_1)) &\le \sum_{j=0}^{\lfloor{\frac{n-1}{2}}\rfloor}\max_{m\ge 0}\{\dim\opH^{n-2t-1}(G,V(m))\}\\
&\le \sum_{j=0}^{\lfloor{\frac{n-1}{2}}\rfloor}C_{n-2t}\\
&\le C_{n+1}.
\end{align*}

{\bf Case 2:} If $a-b$ is odd and $j=p-2-i$, then \cite[Theorem 5.1]{Pa:2006} implies that
\begin{align*}
 \dim\Ext^n_G(V(j),V(pa+i))&=\sum_{t=0}^{(a-1)/2}\dim\opH^{n-2t}(G,V(a-1-2t))\\
&\le\sum_{t=0}^{\lfloor\frac{n}{2}\rfloor}C_{n+1-2t}\\
&\le C_{n+2}.
\end{align*}
Similar argument as for Theorem \ref{Fibonacci for Ext} gives us the following 

\begin{corollary}
For all $r\ge 1$, the sequence $\displaystyle{ \left(\max_{m_1\in X^+,m_2\in X_r^+}\{\dim\Ext^n_G(V(m_2),V(m_1))\}\right)}$ has exponential growth. More precisely, for all $m_1\in X^+, m_2\in X_r^+$
\[\dim\Ext^n_G(V(m_2),V(m_1))\le C_{n+2}+(r-1)C_n. \]
\end{corollary}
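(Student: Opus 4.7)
The plan is to induct on $r$, closely paralleling the argument for Theorem \ref{Fibonacci for Ext} but replacing the Fibonacci bounds with the exponential bounds $C_n$. The two case analyses (Case 1 and Case 2) preceding the statement already supply all the analytic input, so the only remaining work is to package them through Parker's recursion.

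For the base case $r=1$, any $m_2 \in X^+_1$ satisfies $m_2 < p$, so in the notation $m_2 = pb+j$ we have $b = 0$. If Case 1 applies ($a-b$ even, $i=j$), the preceding analysis gives $\dim\Ext^n_G(V(m_2),V(m_1)) \le C_{n+1}$; if Case 2 applies ($a-b$ odd, $j=p-2-i$), it gives $\dim\Ext^n_G(V(m_2),V(m_1))\le C_{n+2}$. Since $C_{n+1} \le C_{n+2}$, the asserted bound $C_{n+2} + (r-1)C_n = C_{n+2}$ holds when $r=1$.

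For the inductive step, suppose the bound holds for some $r \ge 1$ and let $m_2 \in X^+_{r+1}$. Write $m_1 = pa+i$ and $m_2 = pb+j$ with $a \ge b$ and $0 \le i,j \le p-2$; then $b < p^r$, so $b \in X^+_r$. In Case 1 we are done immediately, since $C_{n+1} \le C_{n+2} + rC_n$. In Case 2 I apply \cite[Corollary 5.2]{Pa:2006} to decompose
\[ \dim\Ext^n_G(V(m_2),V(m_1)) = \dim\Ext^n_G(V(b),V(a-1)) + \dim\Ext^{n-1}_G(V(pb+j),V(p(a-1)+j)). \]
The inductive hypothesis bounds the first summand by $C_{n+2}+(r-1)C_n$. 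For the second summand, the pair $(p(a-1)+j,\, pb+j)$ falls under Case 1, because $(a-1)-b$ is even and both weights share lower digit $j$; hence the Case 1 bound at degree $n-1$ gives $C_n$. Summing yields $C_{n+2} + rC_n$, completing the induction.

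The exponential growth assertion then follows from the already-established exponential lower bound for $\dim\opH^n(G,V(m_1))$ in Corollary \ref{recover EHP}, since taking $m_2 = 0 \in X^+_1$ realizes that cohomology as an extension space, so the maximum is squeezed between exponential functions of $n$. I do not anticipate any serious obstacle; the only bookkeeping point is the arithmetic inequality $\sum_{t=0}^{\lfloor(n-1)/2\rfloor} C_{n-2t} \le C_{n+1}$ implicit in the Case 1 analysis, which is immediate from the large growth ratio $C_{n+1}/C_n \sim 4e^{2\pi/\sqrt{3}}$ so that the geometric tail is negligible.
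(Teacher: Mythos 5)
Your proof is correct and follows essentially the same approach the paper intends: the paper's proof is the one-line remark ``Similar argument as for Theorem \ref{Fibonacci for Ext} gives us the following,'' which is exactly the induction on $r$ via \cite[Corollary 5.2]{Pa:2006} that you carry out, with the Fibonacci bounds $F(n+1)$ and $F(n)$ replaced by $C_{n+2}$ (from Case~2) and $C_n$ (from Case~1 at degree $n-1$), and the base case supplied by Case~2. Your observation that $m_2=0$ recovers $\opH^n(G,V(m_1))$ so that Corollary \ref{recover EHP} gives the exponential lower bound is also the intended justification for the growth statement.
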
  

We end this section with the following 

\begin{question}
Suppose $G=SL_2$. For all $n\ge 0$ and $r\ge 1$, is it true that for $m_1\in X^+$ and $m_2\in X_r^+$,$$\dim\Ext^n_G(V(m_2),V(m_1))\le F(n+1)+(r-1)F_n?$$
\end{question} 

\section{Applications}\label{applications}

We present here a few interesting applications, for example, in the cohomological theory of symmetric groups, finite group of Lie type $SL_2(p^s)$, and the algebraic group $SL_2$.

\subsection{Higher rank group $G$ and cohomology of Specht modules} Suppose that $G$ is a reductive group defined over $k$. Let $\Pi$ be the set of simple roots. For each root $\alpha$ in $\Phi$, we denote by $\alpha^\vee$ the dual root corresponding to $\alpha$. Let $(\cdot,\cdot)$ be the inner product on the Euclidean space $E:= \mathbb{Z}\Phi\otimes_{\mathbb{Z}}\mathbb{R}$. Suppose $\lambda$ and $\mu$ are dominant weights in $X^+$. If the difference $\mu-\lambda$ is a multiple of some simple root in $\Pi$, the extension spaces between $V(\lambda)$ and $V(\mu)$ can be related to those for $SL_2$ via the following result.

\begin{proposition}\cite[Corollary 4.3]{E:1995}\label{general G}
Let $\lambda, \mu\in X^+$ and $\mu-\lambda=d\alpha$ for some integer $d$ and simple root $\alpha$. Suppose that $m_{\lambda}=(\lambda,\alpha^\vee)$ and $m_{\mu}=(\mu,\alpha^\vee)$. Then for all $n\ge 0$
\[ \Ext_G^n(V(\lambda),V(\mu))\cong\Ext_{SL_2}^n(V(2m_\lambda),V(2m_\mu)). \]
\end{proposition}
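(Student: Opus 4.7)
The strategy is to reduce the Ext computation to the minimal Levi subgroup attached to $\alpha$, and then descend further to its $SL_2$-derived factor. Let $P=P_\alpha=L\ltimes U_P$ denote the standard parabolic of $G$ whose Levi factor $L=L_\alpha$ has $\alpha$ as its unique simple root; then $[L,L]$ is isogenous to $SL_2$ via the coroot $\alpha^\vee$.

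The first step would be to invoke parabolic Kempf vanishing: for each $\nu\in X^+$, the higher derived functors $R^i\ind_P^G$ vanish on the $L$-module $\ind_{B\cap L}^L(\nu)$ (inflated to $P$ via $P\twoheadrightarrow L$), and the tensor identity together with transitivity of induction yields $\ind_P^G\ind_{B\cap L}^L(\nu)\cong\ind_B^G(\nu)$. Dually, this identifies the $G$-Weyl module $V_G(\nu)$ with the appropriate module built from the $L$-Weyl module $V_L(\nu)$.

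The second step would combine this vanishing with the Grothendieck spectral sequence for the composition of $\ind_P^G$ with $\Hom_G(V_G(\lambda),-)$ (a parabolic form of Frobenius reciprocity). The spectral sequence collapses by the first step, producing a natural isomorphism
\[ \Ext_G^n(V_G(\lambda),V_G(\mu))\cong\Ext_L^n(V_L(\lambda),V_L(\mu)). \]
Because $\mu-\lambda\in\mathbb{Z}\alpha$, the connected center $Z(L)^\circ$ acts by one and the same character on $V_L(\lambda)$ and $V_L(\mu)$, so the $L$-Ext agrees with the Ext computed on the derived subgroup $[L,L]$. After identifying $[L,L]$ with $SL_2$ through $\alpha^\vee$, the two restrictions become $SL_2$-Weyl modules whose highest weights, under the paper's normalization of $X(T_{SL_2})=\mathbb{Z}\varpi$ with $\alpha=2\varpi$, correspond to the integers $2m_\lambda$ and $2m_\mu$, delivering the claimed isomorphism.

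The main obstacle is the second step: establishing rigorously both the vanishing required for parabolic Kempf (good-filtration arguments for $\ind_{B\cap L}^L(\nu)$ as a $P$-module) and the collapse of the Grothendieck spectral sequence. Once these are in place, the central-character argument on $L$ and the bookkeeping between the $G$-weight $\lambda$ and the $SL_2$-weight $2m_\lambda$ (the factor $2$ coming from $\alpha=2\varpi$ in the paper's $SL_2$ conventions) are routine, recovering Erdmann's corollary.
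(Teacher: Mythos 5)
A preliminary remark: the paper itself contains no proof of this proposition --- it is imported verbatim from \cite[Corollary 4.3]{E:1995} --- so your sketch is in effect a reproof of a cited result. Your overall strategy (reduce to the minimal parabolic $P=P_\alpha$ with Levi $L=L_\alpha$, then to $[L,L]$ identified with $SL_2$ through $\alpha^\vee$) is indeed the standard route to statements of this kind, and your Step 1 (Kempf vanishing plus transitivity of induction, giving $\ind_P^G\ind_{B\cap L}^L(\nu)\cong\ind_B^G(\nu)$ with vanishing higher derived functors) is correct.

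The genuine gap is in Step 2. The Grothendieck spectral sequence for $\Hom_G(V(\lambda),-)\circ\ind_P^G$ (which exists because $\ind_P^G$ preserves injectives) abuts to $\Ext^{i+j}_P(V(\lambda)|_P,M)$, so even granting the vanishing from Step 1 it only descends from $G$ to $P$, not to $L$; and it applies only when the second argument is genuinely of the form $\ind_P^G M$ --- but the Weyl module $V(\mu)$ is \emph{not} parabolically induced (what is induced is $\ind_P^G\ind_{B\cap L}^L(\mu)\cong\opH^0(\mu)$, whose dual is $V(\mu)$), so as written your mechanism never computes $\Ext^n_G(V(\lambda),V(\mu))$ at all. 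To repair it one must first pass through the contravariant duality $\Ext^n_G(V(\lambda),V(\mu))\cong\Ext^n_G(\opH^0(\mu),\opH^0(\lambda))$ (or work with the opposite parabolic), collapse the spectral sequence to get $\Ext^n_P\bigl(\opH^0(\mu)|_P,\ind_{B\cap L}^L(\lambda)\bigr)$, and then perform a \emph{second} descent from $P$ to $L$ via a Lyndon--Hochschild--Serre argument for $U_P\trianglelefteq P$; it is precisely in killing the higher $U_P$-cohomology contributions there that the hypothesis $\mu-\lambda\in\mathbb{Z}\alpha$ (through $Z(L)^\circ$-characters/weights) is indispensable. Your write-up claims $\Ext^n_G\cong\Ext^n_L$ \emph{before} using that hypothesis, and in that generality the claim is false (take $L=T$: $\Ext^n_T$ between distinct characters vanishes, while $\Ext^n_G$ between the corresponding Weyl modules need not). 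Finally, the concluding bookkeeping is asserted rather than derived: restricting $\lambda$ through $\alpha^\vee$ gives the $SL_2$-highest weight $m_\lambda=(\lambda,\alpha^\vee)$, i.e.\ $m_\lambda\varpi$, and the factor $2$ does not simply ``come from $\alpha=2\varpi$''; reconciling what your argument actually produces with the stated $V(2m_\lambda)$, $V(2m_\mu)$ requires pinning down the indexing conventions of \cite{E:1995}, which your sketch leaves open.
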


Combining this result with Corollary \ref{recover EHP} for the extension spaces between Weyl modules, we can make the following statement about the growth rate of the dimension of these spaces.

\begin{proposition}
The growth rate of the sequence $\displaystyle{\left(\max_{\lambda, \mu\in X^+} \dim\Ext_G^n(V(\lambda), V(\mu))\right)}$ is at least exponential.
\end{proposition}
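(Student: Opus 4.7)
Plan: The argument reduces the problem to the $SL_2$ case via Proposition \ref{general G} and then invokes the exponential lower bound supplied by Corollary \ref{recover EHP}. Fix once and for all a simple root $\alpha \in \Pi$ of $G$.

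For each $n \ge 1$, set $t_n = \lceil (n+2)/3 \rceil - 1$, $k_n = 2 t_n + 1$, and
\[ m_n \ = \ 2(p + p^2 + \cdots + p^{k_n}) - 2. \]
Since $k_n$ is odd and $p$ is odd, the sum $p + p^2 + \cdots + p^{k_n}$ is a sum of $k_n$ odd numbers, hence odd, so a short parity check shows that $4 \mid m_n$; write $m_n = 4 d_n$. The binomial lower bound in Theorem \ref{cohomology in V(m)} (valid for all $n \ge 0$, not only for $n \le 2p-3$) gives
\[ \dim \opH^n(SL_2, V(m_n)) \ \ge \ \binom{k_n}{2 k_n - n - 1} \ = \ \binom{2 t_n + 1}{4 t_n + 1 - n}, \]
a near-central binomial coefficient of order $2^{k_n}/\sqrt{k_n}$, hence exponential in $n$.

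Next I would realize this $SL_2$-side data inside $G$ by constructing $\lambda_n, \mu_n \in X^+$ differing by a multiple of $\alpha$. Set
\[ \lambda_n \ = \ M_n \sum_{\beta \in \Pi \setminus \{\alpha\}} \varpi_\beta, \qquad \mu_n \ = \ \lambda_n + d_n \alpha, \]
where $M_n \ge 3 d_n$. The classical bound $(\alpha, \beta^\vee) \ge -3$ on Cartan integers guarantees that $\mu_n$ is dominant, and $\lambda_n$ is clearly dominant. By construction $m_{\lambda_n} = (\lambda_n, \alpha^\vee) = 0$, $m_{\mu_n} = (\mu_n, \alpha^\vee) = 2 d_n$, and $\mu_n - \lambda_n = d_n \alpha$, so Proposition \ref{general G} applies and yields
\[ \Ext_G^n(V(\lambda_n), V(\mu_n)) \ \cong \ \Ext^n_{SL_2}(V(0), V(4 d_n)) \ = \ \opH^n(SL_2, V(m_n)). \]
Combined with the previous binomial bound, this shows that $\max_{\lambda, \mu \in X^+} \dim \Ext_G^n(V(\lambda), V(\mu))$ admits an exponential lower bound in $n$, which is precisely the asserted growth rate.

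The main technical point is arithmetic rather than conceptual: Proposition \ref{general G} produces $V(2 m_\mu)$ on the $SL_2$ side, forcing the realized Weyl module to have highest weight in $4 \mathbb{Z}$. The family $m_n = 2(p + \cdots + p^{k_n}) - 2$ with $k_n$ odd is tailored precisely to satisfy this divisibility constraint while still yielding exponentially large $\opH^n(SL_2, V(m_n))$ for every $n$, so no subsequence argument or monotonicity trick is needed.
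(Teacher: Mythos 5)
Your proof is correct and follows the same route the paper takes: reduce to the $SL_2$ case via Proposition \ref{general G} and invoke the exponential lower bound on $\dim\opH^n(SL_2,V(m))$ coming from Theorem \ref{cohomology in V(m)}. The paper's proof is a one-sentence citation of Proposition \ref{general G} together with Corollary \ref{recover EHP}; you have spelled out the details this glosses over --- in particular the arithmetic constraint $4\mid m_n$ forced by the factor of $2$ in Proposition \ref{general G} (since $m_n=2m_{\mu_n}$ with $m_{\mu_n}=2d_n$), the explicit dominance check $M_n\ge 3d_n$, and the per-$n$ choice of $k_n$ that makes the binomial near-central for every $n$ rather than only along the subsequence $3\mid(n+2)$ used in Corollary \ref{recover EHP}.
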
 
 
In the case when $G$ is a general linear group, Proposition \ref{general G} has a direct connection to the low degree cohomology of Specht modules of symmetric groups. This relationship was extensively studied in \cite{KN:2001} and \cite{He:2009}. We mainly use the notation from \cite{He:2009}. 

Let $\Sigma_d$ be the symmetric group on $d$ letters. For each partition $\lambda$ of $d$, denote $S^{\lambda}$ the Specht module of $\Sigma_d$. Kleshchev and Nakano prove in \cite{KN:2001} that for $0\le n\le 2p-4$
\[ \opH^n(\Sigma_d,S^{\lambda})\cong \Ext_{GL_d(k)}^n(\opH^0(d),\opH^0(\lambda))\cong  \Ext_{GL_d(k)}^n\left(V(-w_0\lambda),V(-w_0d)\right), \]
where $w_0$ is the longest element in the Weyl group of $GL_d(k)$, which is $\Sigma_d$, and $\opH^0(d):=\opH^0((d,0,\ldots,0))$. When $\lambda$ is a two-part partition, $\Ext_{GL_d(k)}^n\left(V(-w_0\lambda),V(-w_0d)\right)$ is isomorphic to the $n$-th extension space of the corresponding Weyl modules of $SL_2$, see \cite[5.2]{He:2009}. Hence, our results on the latter yield information about $\dim\opH^n(\Sigma_d,S^{\lambda})$ with $n\le 2p-4$. From Corollary \ref{Ext^3 bound}, Theorem \ref{Fibonacci for Ext}, and Proposition \ref{general G}, we obtain upper bounds of the dimension of these cohomology spaces.

\begin{theorem}\label{Specht module}
Suppose $\lambda$ is a two-part partition of $d$. Then for $n=1,2,$ or $3$, we have
\[ \dim\opH^n(\Sigma_d,S^{\lambda})\le n.\]
Moreover, suppose that $2\lambda\in X^+_r$ for some positive integer $r$. Then for all $0\le n\le 2p-4$, we have \[ \dim\opH^n(\Sigma_d,S^{\lambda})\le F(n+1)+(r-1)F(n).\]
\end{theorem}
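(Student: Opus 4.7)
The strategy is to reduce the computation of $\dim\opH^n(\Sigma_d, S^\lambda)$ to the dimension of an extension space between Weyl modules for $SL_2$, so that the bounds established in Sections \ref{cohomology} and \ref{extension} can be invoked. This will be carried out in three steps, following the template indicated in the paragraph immediately preceding the theorem.

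First, I would invoke the Kleshchev--Nakano isomorphism from \cite{KN:2001}: for all $0\le n\le 2p-4$,
\[\opH^n(\Sigma_d,S^\lambda)\cong \Ext^n_{GL_d(k)}(V(-w_0\lambda),V(-w_0d)),\]
where $w_0$ is the longest element of the Weyl group of $GL_d$. Second, using the hypothesis that $\lambda$ is a two-part partition, the difference $(-w_0d)-(-w_0\lambda)$ is an integer multiple of a single simple root of $GL_d$, so \cite[5.2]{He:2009} (or equivalently Proposition \ref{general G}) translates this Ext group into
\[\Ext^n_{GL_d(k)}(V(-w_0\lambda),V(-w_0d))\cong \Ext^n_{SL_2}(V(m_2),V(m_1))\]
for non-negative integers $m_1, m_2$ determined by $d$ and the parts of $\lambda$. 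A short computation with Proposition \ref{general G} shows that if $\lambda=(\lambda_1,\lambda_2)$ with $\lambda_1\ge\lambda_2\ge 0$, one may take $m_2=2(\lambda_1-\lambda_2)$; in particular $2\lambda\in X^+_r$ forces $m_2\le 2\lambda_1<p^r$, so $m_2\in X^+_r$.

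The third step is to apply the bounds already available for Ext spaces of Weyl modules for $SL_2$. For the first claim of the theorem, Erdmann's result \cite{E:1995} gives the bound $1$ for $n=1$, the work of Cox--Erdmann \cite{CE} gives the bound $2$ for $n=2$, and Corollary \ref{Ext^3 bound} gives the bound $3$ for $n=3$. For the second claim, Theorem \ref{Fibonacci for Ext} provides
\[\dim\Ext^n_{SL_2}(V(m_2),V(m_1))\le F(n+1)+(r-1)F(n)\]
whenever $m_2\in X^+_r$ and $0\le n\le 2p-3$, a range that comfortably contains the Kleshchev--Nakano range $0\le n\le 2p-4$. Combined with the first two steps, this yields both claimed bounds on $\dim\opH^n(\Sigma_d,S^\lambda)$.

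The main obstacle is the bookkeeping in the second step: one must carefully track how $w_0$ acts on $\lambda$ and on $(d,0,\ldots,0)\in X^+$, identify the exact highest weights $m_1, m_2$ of the $SL_2$-Weyl modules produced by the reduction, and verify that the hypothesis $2\lambda\in X^+_r$ on the original partition really translates into the condition $m_2<p^r$ required by Theorem \ref{Fibonacci for Ext}. Once this identification is settled, the rest of the argument reduces to directly quoting previously established results, and both bounds in the theorem follow immediately.
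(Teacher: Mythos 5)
Your proposal is correct and follows essentially the same route as the paper: the Kleshchev--Nakano isomorphism for $0\le n\le 2p-4$, the passage to $SL_2$ Weyl-module extensions for two-part partitions via \cite[5.2]{He:2009} and Proposition \ref{general G} (identifying $m_2=2(\lambda_1-\lambda_2)$ so that $2\lambda\in X^+_r$ gives $m_2\in X^+_r$), and then the bounds from \cite{E:1995}, \cite{CE}, Corollary \ref{Ext^3 bound}, and Theorem \ref{Fibonacci for Ext}. This is exactly the argument the paper sketches in the paragraph preceding the theorem.
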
 

\subsection{Cohomology for the finite group $SL_2(p^s)$ and the algebraic group $SL_2$ on simple modules}
Let $G=SL_2, G(p^s)=SL_2(p^s)$ be the finite group of Lie type of $SL_2$ defined over the finite field $\mathbb{F}_{p^s}$, and assume that $p\ge 3$. The present section is devoted to applying our techniques to obtain explicit bounds for the cohomology dimension of the finite group $G(p^s)$ and of the algebraic group $G$ with coefficients in simple modules. We first compute an exponential upper bound for $\dim\opH^n(G(p^s), L)$ for all simple modules $L$, which requires a combination of the description of the $\Ext^n$ groups by Carlson in \cite{Ca:1983} and our combinatorial techniques. Then using the result of Cline, Parshall, Scott and van der Kallen on stability of generic cohomology \cite{CPSvdK} and a recursive formula of Parker in \cite{Pa:2006}, we establish the same upper bound for $SL_2$. Interestingly, this strategy is opposite of the common usage of generic cohomology, as one uses it to derive results in finite groups from those in algebraic groups.  

Recall that simple modules $L(m)$ of $G$ are parametrized by non-negative integers $m$ in $X^+=\mathbb{N}$. We define
\[ X^+_s=\{ m\in X^+: m< p^s \}\]
to be the set of {$p^s$-restricted weights} in $X^+$. Then every simple module over $G(p^s)$ is the restriction of $L(m)$ for some $m\in X^+_{s}$. Using Steinberg's Tensor Product Theorem, one can describe a simple module of $G(p^s)$ as follows
\[ L(m)\cong L(m_1)\otimes L(m_2)^{(1)}\otimes\cdots\otimes L(m_s)^{(s-1)} \]
where $m_i\in X^+_1$ for all $i$. For this reason, each simple module of $G(p^s)$ can be labeled by an $s$-tuple $\mathfrak m=(m_1,\ldots,m_s)\in (X^+_1)^s$. We denote it by $L_{\mathfrak m}$. It is easy to see that for each $\mathfrak m\in(X^+_1)^s$, we have $L_{\mathfrak m}=L(m)$ with $m=\sum_{i=1}^sm_ip^{i-1}\in X^+_s$. 
  
Suppose $L, M$ are simple $G(p^s)$ modules. The extension groups $\Ext^n_{kG(p^s)}(L,M)$ are completely described by Carlson in \cite{Ca:1983}. In particular, as a $k$-vector space bases of these extensions are indexed by the number of nonnegative integer solutions of certain inequalities and congruences. We use our calculations to estimate $\dim\Ext^n_{kG(p^s)}(L,M)$ as $L$ and $M$ vary. For convenience, we first rephrase the result of Carlson in terms of our notation.
   
\begin{theorem}\cite[Theorem 2.6]{Ca:1983}\label{Carlson}
Let $s$ be a positive integer. Suppose $\mathfrak d, \mathfrak f\in (X^+_1)^s$. For all $n\ge 0$, the dimension of $\Ext^n_{kG(p^s)}(L_{\mathfrak{d}},L_{\mathfrak f})$ as a $k$-vector space is the number of $s$-tuples $\mathfrak a, \mathfrak b, \mathfrak k\in\mathbb{N}^s$ satisfying the following conditions:
\begin{enumerate}
\item $2(a_1+\cdots+a_s)+b_1+\cdots+b_s=n,$
\item $b_i=0$ or $1$ for each $i$,
\item $a_i=b_i=0$ if $d_i$ or $f_i$ is $p-1$,
\item if $b_i=0$, then 
\[  \max\{0,f_i-d_i\}\le k_i\le 
\begin{cases}
f_i~~&\text{if}~~~a_i=0,\\
\min\{ f_i, p-d_i-2\}~~&\text{if}~~~a_i>0,
\end{cases}
\]
while if $b_i=1$ then \[ \max\{0, d_i+f_i+2-p\}\le k_i\le \min\{d_i, f_i\}, \]
\item $\displaystyle{2\left(p\sum_{i=1}^sa_ip^{i-1}+\sum_{i=1}^sb_ip^{i-1}\right)\equiv \sum_{i=1}^s(d_i-f_i+2k_i-2b_id_i)p^{i-1}~\quad (\text{mod}~p^s-1)}$.
\end{enumerate}
\end{theorem}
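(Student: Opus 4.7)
The statement is essentially a reformulation of Carlson's theorem from \cite{Ca:1983} in the notation introduced earlier in the paper, so the bulk of the work is a translation rather than a new proof. Still, let me sketch the approach I would follow if I had to reconstruct the combinatorial description from scratch.

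The plan is to compute $\Ext^n_{kG(p^s)}(L_{\mathfrak d},L_{\mathfrak f})$ by rewriting it as $\opH^n(G(p^s), L_{\mathfrak d}^*\otimes L_{\mathfrak f})$ and analyzing this via a minimal projective resolution of the trivial module. First I would apply Steinberg's Tensor Product Theorem to the tensor product $L_{\mathfrak d}^*\otimes L_{\mathfrak f}$, so that the composition factors are controlled component-by-component by the data $(d_i,f_i)$ and the standard formulas for $L(d_i)\otimes L(f_i)$ in $\Uzg$ or equivalently in $G_1$-modules; these tensor products are well understood and encode the bounds in condition (4), which describes exactly which weights $2k_i$ (or $2k_i-2d_i$, depending on $b_i$) can appear as $T$-weights of the relevant factor. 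Condition (3) reflects the fact that when $d_i$ or $f_i$ equals $p-1$, the corresponding Steinberg factor is projective and thus cannot contribute to higher Ext.

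Next I would use the fact that the cohomology of $G_s$ (or more usefully, its approximations by $G_rT$) decomposes by weight, and that the weights arising from $\opH^n(G_s,k)$ in a minimal resolution sit in the well-known shape of the cohomology ring of the Frobenius kernel: in each degree one pairs up copies of $p^i\alpha$ (from the polynomial part in degree $2$) with copies of $p^i\alpha/2$ (from the exterior part in degree $1$), giving rise to the decomposition $2\sum a_i + \sum b_i = n$ of condition (1) and the restriction $b_i\in\{0,1\}$ of condition (2). This matches precisely the system \eqref{system} with the twist-index variables playing the role of the $p$-adic positions.

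The decisive step, where all the constraints are coupled, is passing from $G_s$-cohomology to $G(p^s)$-cohomology. Since $G(p^s)=G^{F_s}$ contains a Cartan subgroup of order $p^s-1$, taking $T(p^s)$-invariants of the $G_s$-cohomology forces the total $T$-weight of each contributing class to be congruent to $0$ modulo $p^s-1$; carrying out the bookkeeping (each $p^i a_i\alpha$ contributes $2p^{i+1}a_i$, each $p^i b_i$ contributes $2p^i b_i$, and the coefficient system $L_{\mathfrak d}^*\otimes L_{\mathfrak f}$ contributes the weight $\sum(d_i-f_i+2k_i-2b_id_i)p^{i-1}$) produces exactly congruence (5). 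This last identification — tracking the precise weight contribution when $b_i=1$, where the resolution forces an extra shift by $-2d_i$ — is the step I expect to be the main obstacle, since it requires careful attention to the action of $T$ on the simple $G_1$-modules appearing in the tensor decomposition and to the choice of basis of $\opH^\bullet(G_s,k)$ one uses to represent cohomology classes. Once this is nailed down, conditions (1)–(5) of the theorem fall out as the necessary and sufficient constraints on $(\mathfrak a,\mathfrak b,\mathfrak k)$ indexing a basis of the Ext group.
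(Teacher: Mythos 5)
The paper does not prove this theorem; it is a direct quotation of Carlson's Theorem 2.6 from \cite{Ca:1983}, rephrased in the paper's notation, so the only "proof" in the paper is the citation. You recognize this, but your sketch of how one might reconstruct the result has a conceptual gap that would prevent it from working as stated.

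The central problem is that you conflate the Frobenius kernel $G_s$ with the Sylow $p$-subgroup $U(p^s)$ of the finite group $G(p^s)$. You say that one analyzes "$G_s$-cohomology" and then "passes to $G(p^s)$-cohomology by taking $T(p^s)$-invariants." This does not make sense as written: $G_s$ is an infinitesimal group scheme, and $T(p^s)$ does not act on its cohomology in the way needed, nor is $\opH^*(G(p^s),-)$ a $T(p^s)$-invariant part of $\opH^*(G_s,-)$. The route that \emph{does} yield the mod $p^s-1$ congruence works entirely inside finite group cohomology: since the Borel $B(p^s)=U(p^s)\rtimes T(p^s)$ has index prime to $p$ in $G(p^s)$, one has $\opH^*(G(p^s),M)$ as the stable elements in $\opH^*(B(p^s),M)\cong\opH^*(U(p^s),M)^{T(p^s)}$, and the $T(p^s)$-invariance is what imposes weight $\equiv 0 \pmod{p^s-1}$. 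Even then, one still must identify the stable elements, not merely the $T(p^s)$-fixed ones, so the sketch is incomplete. The connection between $\opH^*(U(p^s),k)$ and Frobenius-kernel cohomology (via the May spectral sequence, or the isomorphism for elementary abelian groups at odd $p$) would have to be invoked and justified if you want to reuse the exterior/polynomial decomposition, and this coincidence is a fact about elementary abelian $p$-groups rather than a consequence of generic cohomology.

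Carlson's actual proof is different from your sketch in a more basic way: he does not go through the Sylow-subgroup/stable-elements machinery at all. He directly constructs the minimal projective resolution of the trivial module over $kSL(2,p^n)$, exploiting the known periodicity properties and block structure of that group algebra, and then computes $\Hom$ of the resolution into $L_{\mathfrak d}^*\otimes L_{\mathfrak f}$ (after applying Steinberg's tensor product theorem to describe that coefficient module). The conditions (1)--(5) in the statement then index the irreducible summands of the resolution terms that survive this $\Hom$ computation. So while your proposal correctly identifies some of the ingredients (Steinberg decomposition controlling the $k_i$ via condition (4), the exterior/polynomial degree count giving conditions (1)--(2), the weight congruence giving condition (5)), the mechanism you describe for combining them is not the one Carlson uses and would not work without substantial repair.
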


Now we prove the main result of this section.

\begin{theorem}\label{finite group}
For all $n\ge 0, s\ge 1$, and $\mathfrak d, \mathfrak f\in(X^+_1)^s$, we have
\[ \dim_k\Ext_{kG(p^s)}^n(L_{\mathfrak d},L_{\mathfrak f})\le (2n+2\max_i\{d_i\}+7)C_n\prod_{i=1}^s\left(\min\{d_i, f_i\}+1\right).\]  
Consequently, if $\mathfrak d$ is fixed, the dimension of $\Ext_{kG(p^s)}^n(L_{\mathfrak d},L_{\mathfrak f})$ has an exponential upper bound for all $\mathfrak f$. In particular, when $\mathfrak d=(0,\ldots,0)$, we have for all $s\ge 1$ and $\mathfrak f\in(X^+_1)^s$,
\[ \dim_k\opH^n(G(p^s),L_{\mathfrak f})\le (2n+7)C_n. \]
\end{theorem}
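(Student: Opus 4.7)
The plan is to invoke Carlson's combinatorial description (Theorem~\ref{Carlson}) and bound the number of admissible triples $(\mathfrak a,\mathfrak b,\mathfrak k)$ by factoring the count as (choices of $\mathfrak k$)$\times$(choices of a ``$t$-offset'')$\times$(solutions of an instance of the extended linear system~\eqref{extended system}). First, I would fix $\mathfrak k$ and count its possibilities. Condition~(4) forces $k_i$ to lie in an interval whose length is at most $\min\{d_i,f_i\}+1$: one checks the four subcases ($b_i=0$ with $a_i=0$, $b_i=0$ with $a_i>0$, and $b_i=1$) and in each subcase the range has at most $\min\{d_i,f_i\}+1$ integers. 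This immediately supplies the factor $\prod_i(\min\{d_i,f_i\}+1)$.

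Next, with $\mathfrak k$ fixed I would rewrite congruence~(5) as an honest equation
\[
2p\sum_{i=1}^s a_i p^{i-1}+2\sum_{i=1}^s b_i p^{i-1}=R_{\mathfrak k,\mathfrak b}+t(p^s-1),
\qquad R_{\mathfrak k,\mathfrak b}:=\sum_{i=1}^s(d_i-f_i+2k_i-2b_id_i)p^{i-1},
\]
for some integer $t$. Dividing by $2$ (using $p$ odd so that $p^s-1$ is even, and observing that condition~(1) forces the parity of $R_{\mathfrak k,\mathfrak b}$ to match), this is precisely the second equation of the extended system~\eqref{extended system} with $m=\tfrac{1}{2}(R_{\mathfrak k,\mathfrak b}+t(p^s-1))$, and condition~(1) is exactly the first equation. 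Hence, for each fixed $\mathfrak k$ and $t$, Proposition~\ref{exponential bound for V(m)} bounds the number of $(\mathfrak a,\mathfrak b)$ by $C_n$.

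The remaining task is to bound the number of integers $t$ that can arise. Condition~(1) forces $\sum a_i\le n/2$, whence $\sum a_ip^{i-1}\le (n/2)p^{s-1}$; likewise $\sum b_ip^{i-1}\le (p^s-1)/(p-1)$. This gives a bound on the LHS of order $np^s+O(p^s/(p-1))$. A direct estimate of $|R_{\mathfrak k,\mathfrak b}|$, using $|d_i-f_i+2k_i-2b_id_i|\le 2\max_jd_j+O(1)$ term-by-term (since $k_i\le\min\{d_i,f_i\}$ and $b_i\in\{0,1\}$), yields $|R_{\mathfrak k,\mathfrak b}|\le(2\max_id_i+O(1))(p^s-1)/(p-1)$. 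Dividing by $p^s-1$ collapses the $p^s$ factors and shows that the admissible $t$ lie in an interval of length at most $2n+2\max_id_i+7$, giving the claimed bound
\[
\dim_k\Ext^n_{kG(p^s)}(L_{\mathfrak d},L_{\mathfrak f})\le(2n+2\max_id_i+7)\,C_n\,\prod_{i=1}^s(\min\{d_i,f_i\}+1).
\]
Specializing to $\mathfrak d=(0,\dots,0)$ collapses both the product (each factor is $1$) and the constant $2\max_id_i$ to zero, yielding $\dim_k\opH^n(G(p^s),L_{\mathfrak f})\le(2n+7)C_n$.

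The main obstacle is the careful bookkeeping in the third step: making sure the bound on $|R_{\mathfrak k,\mathfrak b}|$ extracts a clean linear dependence on $\max_id_i$ (rather than a looser bound like $3(p-1)$) and that the LHS estimate is sharp enough for the ratio $np^s/(p^s-1)$ to contribute only $2n$ rather than a larger multiple. A small amount of care is also required to verify the parity compatibility needed to halve the equation, and to ensure that the map from $t$ to the resulting value $m$ in~\eqref{extended system} is well-defined (so that we are not over-counting by conflating different $t$'s with the same set of solutions).
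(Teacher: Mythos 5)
Your overall strategy matches the paper's: factor the count into (choices of $\mathfrak k$) $\times$ (choices of $t$) $\times$ (solutions per fixed $\mathfrak k,t$), bound the number of $\mathfrak k$'s by $\prod_i(\min\{d_i,f_i\}+1)$ via condition (4), bound the last factor by $C_n$ via Proposition~\ref{exponential bound for V(m)}, and then count the admissible $t$'s by comparing magnitudes of the two sides. But there is a genuine gap at the step where you invoke Proposition~\ref{exponential bound for V(m)}: you keep the $-2b_id_i$ term inside $R_{\mathfrak k,\mathfrak b}$, so your target $m=\tfrac12\bigl(R_{\mathfrak k,\mathfrak b}+t(p^s-1)\bigr)$ still depends on the unknown $\mathfrak b$. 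The proposition bounds the number of solutions of system~\eqref{extended system} for one \emph{fixed} $m$; if $m$ ranges over the (up to $2^s$) values $m(\mathfrak b)$, the argument at best delivers $2^sC_n$, which destroys exactly the uniformity in $s$ that the theorem requires.

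The paper's fix is to move that term to the left, rewriting condition (5), once $\mathfrak k$ and $t$ are fixed, as $2\bigl(p\sum_i a_ip^{i-1}+\sum_i b_i(1+d_i)p^{i-1}\bigr)=\sum_i(d_i-f_i+2k_i)p^{i-1}+t(p^s-1)$. Now the right-hand side depends only on $\mathfrak k$ and $t$, so $m$ genuinely is fixed, and setting $b'_i=b_i(1+d_i)\in\{0,1+d_i\}$ turns the equation into the second equation of the extended system~\eqref{extended system} with step sequence $(1+d_i)$. This is the whole point of formulating the extended system with a general sequence $\mathfrak d$ rather than just $b_i\in\{0,1\}$: it is designed to absorb precisely these coefficients.

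A secondary consequence of the same misplacement: your term-by-term estimate $|d_i-f_i+2k_i-2b_id_i|\le 2\max_jd_j+O(1)$ is false in general. For instance take $\mathfrak d=(0,\ldots,0)$, $b_i=0$, $a_i=0$; condition (4) then forces $k_i=f_i$, so the term equals $f_i$, which can be as large as $p-2$ while $\max_jd_j=0$. In the paper's bookkeeping the $\mathfrak b$-independent right-hand side $\sum_i(d_i-f_i+2k_i)p^{i-1}$ lies in $[-(p^s-1),3(p^s-1)]$ with no $\max d_i$ dependence at all, and the $2\max_id_i$ appearing in the final count of $t$'s comes entirely from the left-hand side, where the absorbed $(1+d_i)$ factors make the estimate clean.
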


\begin{proof}
Firstly, Proposition \ref{exponential bound for V(m)} gives us that for each $\mathfrak k\in\mathbb{N}^s$ and $t\in\mathbb{Z}$, the number of $s$-tuples $\mathfrak a, \mathfrak b$ satisfying (1), (2), and a rewritten form of (5) 
\begin{align}\label{rewritten form of (5)} 
2\left(p\sum_{i=1}^sa_ip^{i-1}+\sum_{i=1}^sb_i(1+d_i)p^{i-1}\right)= \sum_{i=1}^s(d_i-f_i+2k_i)p^{i-1}+t(p^s-1)
\end{align}
is bounded by $C_n$. Now an upper bound for $\dim_k\Ext_{kG(p^s)}^n(L_{\mathfrak d},L_{\mathfrak f})$ can be established by counting the number of possible $\mathfrak k$ and $t$. From (4) in Theorem \ref{Carlson}, we see that the number of candidates for $\mathfrak k$ is at most $\prod_{i=1}^s\left(\min\{d_i, f_i\}+1\right)$. On the other hand, the left-hand side of \eqref{rewritten form of (5)} is at most $2(n+\max_i\{d_i\}+1)p^s$ while the right-hand side
\[ \sum_{i=1}^s(d_i-f_i+2k_i)p^{i-1} \ge -\sum_{i=1}^sf_ip^{i-1}\ge -(p^s-1) \] 
and 
\[ \sum_{i=1}^s(d_i-f_i+2k_i)p^{i-1}\le 3\sum_{i=1}^sd_ip^{i-1}\le 3(p^s-1). \] 
It follows that the number of possible $t$ is $2n+2\max_i\{d_i\}+7$, which is a constant depending only on $n$ and $\mathfrak d$. Finally, we have shown that the number of $s$-tuples $\mathfrak a, \mathfrak b, \mathfrak k$ satisfying $(1), (2), (4),$ and $(5)$ in Theorem \ref{Carlson} is at most $(2n+2\max_i\{d_i\}+7)C_n\prod_{i=1}^s\left(\min\{\alpha_i,\beta_i\}+1\right)$; hence we obtain the first inequality of the theorem. The other parts follow immediately by fixing $\mathfrak d$. 
\end{proof}

\begin{remark}
The upper bound for $\dim\opH^n(G(p^s),L)$ in the above theorem does not depend on $s$ and the simple module $L$. We call such number a {\it universal bound} for $SL_2(p^s)$. In general, it is shown in \cite{BNPPSS} that for a simple algebraic group $G$ of arbitrary rank, there exists a universal bound for $\dim\opH^n(G(p^s),L)$ depending only on $n$ and $\rank(G)$. However, determining such a bound is a challenging task.   
\end{remark}
Our result also implies the following
\begin{corollary}
The sequence $\displaystyle{\left(\max_{s\ge 1, m\in X^+_s}\{\dim\opH^n(G(p^s),L(m))\}\right)}$ grows exponentially.
\end{corollary}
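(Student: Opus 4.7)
The statement requires both an exponential upper bound and an exponential lower bound for the sequence (see the definition at the end of Section \ref{notation}). The upper bound is immediate from Theorem \ref{finite group}: with $\mathfrak d=(0,\ldots,0)$, we have $\dim\opH^n(G(p^s),L(m))\le(2n+7)C_n$ uniformly in $s\ge 1$ and $m\in X^+_s$, and $(2n+7)C_n=(2n+7)n4^n e^{2\pi n/\sqrt{3}}$ is exponential in $n$, so taking the maximum preserves this bound.

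For the lower bound, my plan is to produce, for each $n$, an explicit pair $(s,\mathfrak f)$ for which the combinatorial count in Theorem \ref{Carlson} (taken with $\mathfrak d=(0,\ldots,0)$, so that $L_{\mathfrak d}=k$) already admits at least exponentially many solutions. Taking $\mathfrak d=0$ pins $\mathfrak k$ down via condition (4): one has $k_i=f_i$ when $b_i=0$ and $k_i=0$ when $b_i=1$, both requiring $f_i\le p-2$. After eliminating $\mathfrak k$, condition (5) becomes a congruence modulo $p^s-1$ of the form
\[
2\sum_i a_i p^i + 2\sum_i b_i p^{i-1}\equiv \sum_i(-1)^{b_i}f_ip^{i-1}\pmod{p^s-1}.
\]
For $s$ large enough the congruence collapses to an honest equation of $p$-adic integers, and this equation together with condition (1) is of essentially the same shape as the system \eqref{system} of Section \ref{counting}.

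Concretely, mirroring the lower bound in the proof of Corollary \ref{recover EHP}, I would set $t=\lceil(n+2)/3\rceil-1$, take $s=2t+2$, and choose $\mathfrak f$ whose $p$-adic shape is supported on well-separated coordinates (for example, nonzero entries equal to $2$ in $t$ odd-indexed slots, zeros elsewhere). The resulting counting problem then reduces, via a minor adaptation of Lemma \ref{key_lemma}, to a binomial count at least $\binom{2t+1}{t}\ge 2^{\lceil n/3\rceil}$. The main obstacle is the sign twist $(-1)^{b_i}$ surviving on the right-hand side of condition (5); to circumvent this I would arrange the zero digits of $\mathfrak f$ so that no $p$-adic carry propagation can occur, after which the signs can be absorbed into the choice of the corresponding $b_i$'s without altering the binomial count. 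Combined with the upper bound above, this yields exponential growth in the sense of Section \ref{notation}.
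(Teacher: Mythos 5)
You correctly read the paper's convention (``grows exponentially'' means both an exponential upper and an exponential lower bound), and your upper-bound step is exactly the paper's: specialize Theorem~\ref{finite group} to $\mathfrak d=0$, giving the uniform bound $(2n+7)C_n$. For the lower bound, however, you take a genuinely different route from the paper, and your route has a real gap. The paper's proof is a two-line citation: generic cohomology \cite{CPSvdK} identifies $\opH^n(SL_2(p^s),L(m))\cong\opH^n(SL_2,L(m)^{(s')})$ for $s,s'$ large, and Stewart's example in \cite[Remark~2.8]{Ste} already supplies an exponential lower bound for the right-hand side. No new combinatorics are needed.

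Your plan instead attempts a direct count through Carlson's conditions. Your reduction to the congruence
\[
2\sum_i a_i p^{i}+2\sum_i b_ip^{i-1}\equiv\sum_i(-1)^{b_i}f_ip^{i-1}\pmod{p^s-1}
\]
is correct (and the simplification $k_i=f_i$ for $b_i=0$, $k_i=0$ for $b_i=1$ is also correct). But the step where you ``absorb'' the sign twist $(-1)^{b_i}$ into the choice of the $b_i$'s ``without altering the binomial count'' is not substantiated, and it is exactly where the difficulty lies. Writing $(-1)^{b_i}f_i=f_i-2b_if_i$ turns condition~(5) into
\[
2\sum_i a_ip^{i}+2\sum_i b_i(1+f_i)p^{i-1}\equiv\sum_i f_ip^{i-1}\pmod{p^s-1},
\]
which is an instance of the \emph{extended} system \eqref{extended system} with $d_i=1+f_i$, not of the original system \eqref{system}. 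Lemma~\ref{key_lemma} (and Proposition~\ref{maximum form}, which you are implicitly invoking for the binomial lower bound) are proved only for \eqref{system}, not for \eqref{extended system}; the paper bounds $N_r^{\mathfrak d}$ from above in Proposition~\ref{exponential bound for V(m)} but never from below. In addition, the modular reduction by $p^s-1$ is not automatically an equality for your parameters: even with the $a_i$ forced small by condition~(1), the top coefficient $a_s$ contributes $a_sp^s\equiv a_s$, so wraparound must be ruled out explicitly. Both issues are fixable in principle, but as written the lower-bound step is not a proof; by contrast, the paper's citation of Stewart closes the argument immediately.
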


\begin{proof} Let $\gamma_n$ denote the general term in the sequence.
By Theorem \ref{finite group}, it suffices to find an exponential lower bound of each $\gamma_n$. For sufficiently large $s$ and $s'$, a result on generic cohomology in \cite{CPSvdK} gives us
\[  \opH^n(SL_2(p^s),L(m))\cong\opH^n(SL_2, L(m)^{(s')}) \]
for all $n\ge 0$. Then the example of Stewart in \cite[Remark 2.8]{Ste} shows that the dimension of the right-hand side grows at least exponentially.
\end{proof}

Now we return to the cohomology of $G=SL_2$. This time we employ the previous results for finite groups and generic cohomology to obtain an exponential upper bound for the dimension of $\opH^n(G,L)$ for all simple $G$-modules $L$. More precisely, we have

\begin{theorem}\label{simple G-module}
For all $n\ge 0$ and $m\in X^+$, we have \[\dim\opH^n(G,L(m))\le (2n+7)C_n.\] In other words, the sequence $\displaystyle{\left(\max_{m\in X^+}\{ \dim\opH^n(G,L(m))\}\right)_n}$ has exponential growth.
\end{theorem}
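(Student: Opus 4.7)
The strategy is to invert the usual flow of information, bounding algebraic-group cohomology by using the finite-group bound already obtained in Theorem \ref{finite group}. The two technical bridges are the generic cohomology theorem of Cline, Parshall, Scott, and van der Kallen \cite{CPSvdK}, and a recursive formula of Parker \cite{Pa:2006}.

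First, for each fixed $m\in X^+$ and each $n$, generic cohomology supplies integers $d_0,s_0$ (depending on $m$ and $n$) such that for all $d\ge d_0$ and $s\ge s_0$,
$$\opH^n(G,L(m)^{(d)})\cong\opH^n(G(p^s),L(m)).$$
Theorem \ref{finite group}, specialized to the trivial module in the first slot, bounds the right-hand side by $(2n+7)C_n$ uniformly in $s$ and in the simple module $L(m)$. Hence the "stable" cohomology $\dim\opH^n(G,L(m)^{(d)})$ enjoys the same bound as soon as $d$ is large enough, and the bound is independent of $m$.

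Second, to descend from a large Frobenius twist down to $d=0$, I would invoke Parker's recursive formula. Writing $m=m_0+pm_1$ with $m_0\in X^+_1$, Steinberg's tensor product theorem gives $L(m)=L(m_0)\otimes L(m_1)^{(1)}$, and a Lyndon--Hochschild--Serre argument with the first Frobenius kernel $G_1$, combined with Parker's recursion, expresses $\opH^n(G,L(m))$ as a direct sum of cohomology terms involving Frobenius-twisted simple modules of either strictly lower cohomological degree or strictly smaller highest weight. Inducting on the pair $(n,m)$ then propagates the bound $(2n+7)C_n$ from the stable range of Step 1 to every $m\in X^+$, provided each recursion step contributes only additive summands which are themselves already bounded by $(2n+7)C_n$.

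The main obstacle lies in the bookkeeping of Step 2: one must verify that every term produced by the recursion either falls immediately into the stable twist range handled by Step 1 or can be absorbed into the inductive hypothesis, and that the total number of such summands does not blow up beyond what $(2n+7)C_n$ allows. Once this compatibility between the recursion and the generic cohomology bound is established, the inequality $\dim\opH^n(G,L(m))\le (2n+7)C_n$ follows for every $m\in X^+$, and the exponential growth of the sequence $\max_{m\in X^+}\dim\opH^n(G,L(m))$ is immediate from the explicit form $C_n=n4^n e^{2\pi n/\sqrt{3}}$, strengthening the lower bound of Stewart \cite{Ste}.
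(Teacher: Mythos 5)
Your first step matches the paper exactly: use generic cohomology to transfer the bound from Theorem \ref{finite group} to $\dim\opH^n(G,L(m)^{(d)})$ for large $d$. The gap is in your second step, where you describe a double induction on $(n,m)$ via Steinberg's tensor product theorem and a Lyndon--Hochschild--Serre decomposition, and you concede yourself that the "bookkeeping" has not been carried out. This is not a cosmetic concern: as described, your recursion decomposes $\opH^n(G,L(m))$ into summands of lower degree or smaller weight, but you have not said what the base cases are, and nothing in your Step 1 bounds the cohomology of small untwisted weights; so the induction has no place to land. Moreover you would need to control \emph{how many} summands appear at each stage to keep the total under $(2n+7)C_n$, which is exactly the part you admit is unresolved.

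The paper avoids all of this with a single observation you are missing. Parker's Theorem 4.3 gives the \emph{exact} identity
\[
\dim\opH^n(G,L(p^dm))=\dim\opH^n(G,L(p^{d-1}m))+\sum_{i=1}^{\lfloor n/2\rfloor}\dim\Ext^{n-2i}_G(V(2i),L(p^{d-1}m)),
\]
in which every term on the right is a non-negative integer. Hence $\dim\opH^n(G,L(p^{d-1}m))\le\dim\opH^n(G,L(p^dm))$, i.e.\ the dimension is monotone non-decreasing in the twist $d$. Since the stable value for $d\gg 0$ is bounded by $(2n+7)C_n$ from Step 1, every $\dim\opH^n(G,L(p^{d-1}m))$, and in particular $d=0$, inherits the same bound. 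There is no double induction, no need to track the number of summands, and no issue with base cases: you descend one twist at a time and simply discard the non-negative $\Ext$ terms. You should replace your Step 2 with this monotonicity argument; the rest of your proposal is correct.
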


\begin{proof}
For each $m\in X^+$, the generic cohomology in \cite{CPSvdK} and Theorem \ref{finite group} gives us
\begin{align*} 
\dim\opH^n(G,L(p^dm)) &=\dim\opH^n(G,L(m)^{(d)})\\
&=\dim\opH^n(G(p^s),L(m))\\
&\le (2n+7)C_n, 
\end{align*}
for each $n\ge 0$. Note that we consider here $d$ and $s$ sufficiently large, especially $s$ needs to be chosen so that $L(m)$ remains a simple module of $G(p^s)$. Next, using Theorem 4.3 in \cite{Pa:2006} we get
\[ \dim\opH^n(G,L(p^dm))=\dim\opH^n(G,L(p^{d-1}m))+\sum_{i=1}^{\lfloor\frac{n}{2}\rfloor}\dim\Ext^{n-2i}(V(2i),L(p^{d-1}m)). \]
It follows that $\dim\opH^n(G,L(p^{d-1}m))\le (2n+7)C_n$. Repeating the argument, we obtain $$\dim\opH^n(G,L(m))\le (2n+7)C_n.$$ Since $m$ was chosen arbitrarily, we finish our proof.
\end{proof}

\begin{remark}
Theorem \ref{simple G-module} is much stronger than the observation made in \cite{Ste} where it is shown that there exists a sequence of $m_n\in X^+$ such that the sequence $\displaystyle{\left(\dim\opH^n(G,L(m_n))\right)}$ grows exponentially. Note also that our method does not rely on Kazhdan--Lusztig polynomial coefficients as expected in \cite{PS:2011}. It would be great if one could extend our results to higher rank groups.

We end this paper by proposing a couple of interesting questions on universal bounds for higher rank groups. 
\end{remark}

\begin{question}
Suppose $G$ is a simple algebraic group defined over $k$. Do the dimensions of $\opH^n(G(p^s), L), \opH^n(G, L(\lambda))$, $\opH^n(G,V(\lambda))$ have exponential universal bounds, only depending on $n$, for $L, L(\lambda),$ and $V(\lambda)$ respectively simple modules of $G(p^s)$, $G$, and Weyl modules of $G$?
\end{question}

\section*{Acknowledgments}
The second author would like to thank Dan Nakano and Chris Bendel for useful suggestions.

\providecommand{\bysame}{\leavevmode\hbox to3em{\hrulefill}\thinspace}

\end{document}